\newtheorem{theorem}{Theorem}[section]
\newtheorem{lemma}[theorem]{Lemma}
\newtheorem{proposition}[theorem]{Proposition}
\theoremstyle{remark}
\numberwithin{equation}{section}
\newcommand{\Mfrak}{\mathfrak{M}}
\newcommand{\Qfrak}{\mathfrak{Q}}
\newcommand{\Ccal}{\mathscr{C}}
\newcommand{\Ecal}{\mathscr{E}}
\newcommand{\Lcal}{\mathscr{L}}
\newcommand{\Pcal}{\mathscr{P}}
\newcommand{\Scal}{\mathscr{S}}
\newcommand{\Pro}{\mathbb{P}}
\newcommand{\Z}{\mathbb{Z}}
\newcommand{\Q}{\mathbb{Q}}
\newcommand{\R}{\mathbb{R}}
\newcommand{\N}{\mathbb{N}}
\newcommand{\A}{\mathbb{A}}
  \DeclareFontFamily{U}{wncy}{}
    \DeclareFontShape{U}{wncy}{m}{n}{<->wncyr10}{}
    \DeclareSymbolFont{mcy}{U}{wncy}{m}{n}
    \DeclareMathSymbol{\Sha}{\mathord}{mcy}{"58}
\begin{document}
\title[]{Effectivity for existence of rational points is undecidable}

\author{Natalia Garcia-Fritz}
\address{ Departamento de Matem\'aticas,
Pontificia Universidad Cat\'olica de Chile.
Facultad de Matem\'aticas,
4860 Av.\ Vicu\~na Mackenna,
Macul, RM, Chile}
\email[N. Garcia-Fritz]{natalia.garcia@uc.cl}%

\author{Hector Pasten}
\address{ Departamento de Matem\'aticas,
Pontificia Universidad Cat\'olica de Chile.
Facultad de Matem\'aticas,
4860 Av.\ Vicu\~na Mackenna,
Macul, RM, Chile}
\email[H. Pasten]{hector.pasten@uc.cl}%

\author{Xavier Vidaux}
\address{Universidad de Concepci\'on, Concepci\'on, Chile, Facultad de Ciencias F\'isicas y Matem\'aticas Departamento de Matem\'atica}
\email[X. Vidaux]{xvidaux@udec.cl}%

\thanks{N.G.-F. was supported by ANID Fondecyt Regular grant 1211004 from Chile. H.P was supported by ANID Fondecyt Regular grant 1230507 from Chile. X.V. was supported by ANID Fondecyt Regular grant 1210329 from Chile. The three authors were supported by the NSF Grant No. DMS-1928930 while at the MSRI, Berkeley, in the Summer of 2022.}
\date{\today}
\subjclass[2010]{Primary: 11U05; Secondary: 11C08, 11G50} %
\keywords{Hilbert's tenth problem, heights, effectivity, undecidable}%

\dedicatory{To the memory of Thanases Pheidas}

\begin{abstract} The analogue of Hilbert's tenth problem over $\mathbb{Q}$ asks for an algorithm to decide the existence of rational points in algebraic varieties over this field. This remains as one of the main open problems in the area of undecidability in number theory. Besides the existence of rational points, there is also considerable interest in the problem of effectivity: one asks whether the sought rational points satisfy determined height bounds, often expressed in terms of the height of the coefficients of the equations defining the algebraic varieties under consideration. We show that, in fact, Hilbert's tenth problem over $\mathbb{Q}$ with (finitely many) height comparison conditions is undecidable. 
\end{abstract}

\maketitle

%%%%%%%%%%%%%%%%%%%%%%%%%%%%%%%%%%%%%%
%%%%%%%%%%%%%%%%%%%%%%%%%%%%%%%%%%%%%%
%%%%%%%%%%%%%%%%%%%%%%%%%%%%%%%%%%%%%%
%%%%%%%%%%%%%%%%%%%%%%%%%%%%%%%%%%%%%%
%%%%%%%%%%%%%%%%%%%%%%%%%%%%%%%%%%%%%%
%%%%%%%%%%%%%%%%%%%%%%%%%%%%%%%%%%%%%%

\section{Introduction} 

\subsection{Hilbert's tenth problem} Hilbert's tenth problem asked for an algorithm to decide the existence of integral solutions to Diophantine equations over $\Z$. A negative solution was shown by Matiyasevich \cite{Mat} after the work of Davis, Putnam, and Robinson \cite{DPR}. Motivated by this result, the analogous problem has been investigated over various other rings and fields, and perhaps the most important open case is the field of rational numbers $\Q$. In this case, the required algorithm should decide the existence of rational points over any given algebraic variety over $\Q$.

Let us recall that the standard strategy to attack Hilbert's tenth problem for $\Q$ consists of trying to show that $\Z$ is Diophantine (i.e. positive existentially definable) in the field $\Q$, in order to transfer the undecidability of $\Z$ to $\Q$. Regarding this strategy, in 1949 J. Robinson \cite{Robinson} showed that $\Z$ is first order definable in $\Q$.  In 2007 Poonen \cite{PoonenZQ} showed that $\Z$ admits a definition of the form $\forall^2\exists^7$ in $\Q$ (meaning that the definition uses $2$ universal quantifiers followed by $7$ existential ones.) On the other hand, Koenigsmann \cite{KoeZQ} showed in 2010 that $\Z$ admits a $\forall\exists^n$-definition in $\Q$ (for certain $n$) and that $\Q-\Z$ is positive existentially definable in $\Q$. 

In a different direction, Poonen \cite{PoonenBig} showed that there is a computable set of prime numbers $S$ with density $1$ in the primes such that Hilbert's tenth problem for $S^{-1}\Z$ has a negative answer, see also \cite{Shl1, Shl2, Shl3}. This can be seen as an approximation to the case of $\Q$ in the sense that if one could take $S$ as the set of all prime numbers then $S^{-1}\Z$ would be equal to $\Q$. See \cite{EasyQ} and the references therein for further developments related to this approach.

See also \cite{Pheidas} for a detailed discussion on strategies to approach Hilbert's tenth problem over $\Q$.

It is often the case in Diophantine geometry that for a given algebraic variety over $\Q$ one not only asks about the existence of rational points, but there is also the question of \emph{effectivity}: one wants height bounds for the sought rational points. In this work we show that the problem of existence of solutions to Diophantine equations over $\Q$ with height conditions is undecidable. Before giving a precise formulation of our main results we need to introduce some notation.

%%%%
%%%%
%%%%
\subsection{Heights}

Given a rational number $q=a/b$ with $a,b$ coprime integers, the (logarithmic) height of $q$ is $h(q)=\log \max\{|a|,|b|\}$. More generally, the height of a tuple ${\bf q}=(q_1,...,q_m)\in \Q^m$  is
$$
h_m({\bf q})=\log \max\{d,|dq_1|,...,|dq_m|\}
$$
where $d\ge 1$ is the least common denominator of the $q_j$. In particular, $h_1=h$.

For $m,n\ge 1$ let us define the height comparison relation $H_{m,n}$ on $\Q^m\times \Q^n$ by
$$
H_{m,n}({\bf x},{\bf y}):\quad h_m({\bf x})\le h_n({\bf y}), \quad \mbox{ for }{\bf x}\in \Q^m\mbox{ and } {\bf y}\in \Q^n.
$$

These relations $H_{m,n}$ allows us to formulate questions of effectivity for the existence of rational points in varieties. For instance, given an integer $M\ge 1$ consider the existential statement
$$
\begin{aligned}
\Pcal_M: &\quad \mbox{There are }(x_1,x_2)\in \Q^2 \mbox{ and }a\in \Q-\{-1,0,1\}\mbox{ such that }\\
&\qquad x_2^2=x_1^5+a\mbox{ and }H_{1,2}(a^M, (x_1,x_2)).
\end{aligned}
$$
Note that if $\Pcal_M$ fails for certain fixed $M$, then ``effective Mordell'' holds for the family of genus 2 hyperelliptic curves $x_2^2=x_1^5+a$ in the form $h_2(x_1,x_2) < M\cdot h(a)$. (As usual, by ``effective Mordell'' one means Faltings's theorem \cite{Faltings1} with a bound for the height of the rational points of a curve in terms of the height of the coefficients of an equation for the curve.) While a height bound as the previous one is expected for some $M$,  effective Mordell is a major open problem even for the family of hyperelliptic curves mentioned above ---see \cite{Alpoge} for a concrete example where a form of effective Mordell holds: namely, for the $1$-parameter family of curves $x^6+4y^3=a^2$ where one can give a computable bound for the height of rational points in terms of $a$, although it is not polynomial on the (multiplicative) height of the parameter $a$.

%%%%
%%%%
%%%%
\subsection{Undecidability}

 Let $\Lcal_h$ be the language formed by the constant symbols $0,1$, the function symbols $+$ and $\times$, and the relation symbols $=$ and $H_{m,n}$ for $m,n\ge 1$; that is, $\Lcal_h$ is the language of arithmetic endowed with the symbols $H_{m,n}$. Let us interpret the symbols of $\Lcal_h$ on $\Q$ in the obvious way, thus obtaining an $\Lcal_h$-structure that we denote by $\Qfrak$.

The positive existential theory of the structure $\Qfrak$ precisely formalizes the problem of existence of rational points in varieties with height inequalities conditions. For instance, the assertions $\Pcal_M$ can be restated as the following positive existential sentence over the language $\Lcal_h$:
$$
\exists x_1\exists x_2 \exists a (a^3\ne a)\wedge (x_2^2=x_1^5+a)\wedge H_{1,2}(a^M,(x_1,x_2))
$$
where we use the fact that $\ne$ is positive existentially definable over any field in the language of rings.

With these notation, we can formulate our main results.
\begin{theorem}[Interpretability] \label{ThmMain1} There is a positive existential interpretation of the semiring $(\N;0,1,+,\times,=)$ in the structure $\Qfrak$. 
\end{theorem}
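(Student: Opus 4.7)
The goal is to produce a positive existential interpretation of the semiring $(\N,0,1,+,\times,=)$ in $\Qfrak$; combined with the Matiyasevich--Davis--Putnam--Robinson theorem, this will yield undecidability of the positive existential theory of $\Qfrak$. Concretely, one must exhibit a positive existentially definable set $D\subseteq\Q^k$, a positive existentially definable equivalence $\sim$ on $D$, positive existential formulas for the graphs of $0,1,+,\times$, and a bijection $D/{\sim}\cong\N$ intertwining the two structures.

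My plan is to realize this interpretation using an elliptic curve $E/\Q$ of rank at least one, with a fixed generator $P$ of the free part of $E(\Q)$. Every rational point has a unique representation $nP+T$ with $n\in\Z$ and $T\in E(\Q)_{\mathrm{tors}}$, so $E(\Q)/E(\Q)_{\mathrm{tors}}\cong\Z$. I take $D\subseteq\Q^2$ to be the affine part of $E(\Q)$ (definable by the Weierstrass equation of $E$), take $\sim$ to be ``$[N](Q-Q')=O$'' where $N$ is the exponent of $E(\Q)_{\mathrm{tors}}$ (positive existentially definable because the multiplication-by-$N$ map on $E$ is given by polynomials), and impose non-negativity of the index via a sign condition on the $Y$-coordinate of the representative, such as ``$Y$ is a sum of four rational squares''. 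The constants $0$ and $1$ correspond to the identity $O$ and to $P$ itself. Addition of indices $(nP,mP)\mapsto (n+m)P$ is the elliptic group law, which is polynomial in the affine coordinates and hence positive existentially definable using no heights at all.

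The main obstacle is positive existentially defining multiplication of indices $(nP,mP)\mapsto (nm)P$, which is an iterated rather than polynomial operation. My key resource is the quadratic scaling of the N\'eron--Tate canonical height $\hat h(nP)=n^2\hat h(P)$, which yields the multiplicative characterization $\hat h((nm)P)\cdot \hat h(P)=\hat h(nP)\cdot \hat h(mP)$ of the multiplication relation on the indices. The height comparisons $H_{m,n}$ in $\Lcal_h$ should let me translate this metric identity into a positive existential formula involving the Weil heights of the rational coordinates of $nP$, $mP$ and $(nm)P$. The central technical difficulty is that Weil heights agree with canonical heights only up to a bounded error, so the exact multiplicative identity on $\hat h$ has to be recovered from Weil-height comparisons that are only approximate. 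To bridge this gap I would exploit the discreteness of the value set $\{\log k:k\in\Z_{\ge 1}\}$ of Weil heights, together with the fact that scaling by $[k]$ for large $k$ asymptotically washes out the bounded error between Weil and canonical heights, turning approximate height inequalities into the sharp identities needed to positive existentially encode the multiplication on $\N$. Carrying out this bridging step in the language $\Lcal_h$, and verifying that the resulting formulas pin down exactly the semiring structure of $(\N,+,\times)$, is where I expect the bulk of the technical work to lie.
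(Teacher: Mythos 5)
Your skeleton is recognizably in the same family as the paper's — both hinge on a rank\mbox{-}one elliptic curve and the quadratic scaling $\hat h(nP)=n^2\hat h(P)$ — but the crucial step, multiplication, has a genuine gap, and your choice of interpretation map makes the repair harder than it needs to be.

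The identity $\hat h((nm)P)\cdot\hat h(P)=\hat h(nP)\cdot\hat h(mP)$ involves \emph{products} of heights, and the language $\Lcal_h$ gives you no access to such products. The symbols $H_{m,n}$ only express comparisons $h_m(\mathbf x)\le h_n(\mathbf y)$, and the one algebraic identity that lets you manipulate heights inside $\Lcal_h$ is additive, namely $h(q_1)+h(q_2)=h_3(q_1,q_2,q_1q_2)$ (Lemma~\ref{LemmaHsum}); there is no analogous finite formula turning a product of heights into a height of a tuple. Your suggested fix — pass to large multiples $[k]$ so the Weil/canonical discrepancy washes out — does not address this, and in any case ``for all large $k$'' is not a single positive existential condition. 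So the formula you sketch cannot be written in $\Lcal_h$, and the interpretation of $\times$ is not obtained this way.

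The paper sidesteps multiplication entirely. It interprets the weaker structure $(\N;0,1,+,B,=)$, where $B(a,b)$ says $a,b$ are consecutive squares, and then recovers $\times$ by pure algebra from $0,1,+,B$ (Lemma~\ref{LemmaMult}). The relation $B$ becomes accessible because the group law on the curve trivially pairs up $Q_k$ with $Q_{k+1}=Q_k+Q_1$, and the height comparison then certifies that the corresponding integers are $\approx k^2$ and $\approx(k+1)^2$. Moreover, the paper's interpretation map is \emph{not} $nP\mapsto n$: it is $\theta(q)\approx h(q)/D$ on a set $X_4\subseteq\Q$, so that integers are encoded by heights rather than by indices. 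This is essential: with heights as the carrier, addition of integers is realized via the $h_3(x,y,xy)$ identity, and comparisons/approximate equalities are exactly what $\Lcal_h$ can say. With your index-based encoding $nP\mapsto n$, addition is free (group law) but there is then no obvious route in $\Lcal_h$ to express either $\times$ directly or $B$, because the integer $n$ is not itself a height. (Surjectivity onto all of $\N$, rather than onto the squares, is arranged in the paper via Lagrange's four-square theorem inside the construction of $X_4$.) You would need to change both the target structure (to $(\N;0,1,+,B,=)$) and the interpretation map (to a height-based $\theta$) to make the argument go through.
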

Here we are using the notation $\N=\{0,1,2,...\}$. Since the negative solution of Hilbert's tenth problem gives that the positive existential theory of $(\N;0,1,+,\times,=)$ is undecidable,  we deduce:
\begin{theorem}[Undecidability] \label{ThmMain2}
The positive existential theory of $\Q$ over the language $\Lcal_{h}$ is undecidable. Thus, there is no algorithm that takes as input a system of Diophantine equations over $\Q$ with height comparison conditions $H_{m,n}$ and decides whether or not there is a rational solution satisfying the prescribed height conditions.
\end{theorem}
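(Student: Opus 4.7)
The plan is to deduce Theorem \ref{ThmMain2} directly from the interpretability statement in Theorem \ref{ThmMain1}, using Matiyasevich's negative solution of Hilbert's tenth problem as the source of undecidability. First, I would unpack what a positive existential interpretation provides: each positive existential formula $\varphi(n_1,\ldots,n_k)$ over the semiring $(\N;0,1,+,\times,=)$ is translated, by an explicit and algorithmic rule, into a positive existential $\Lcal_h$-formula $\varphi^*(\bar x_1,\ldots,\bar x_k)$ that holds in $\Qfrak$ on tuples encoding $n_1,\ldots,n_k$ exactly when $\varphi$ holds in $\N$. This translation is effective: given a sentence in the language of the semiring, one mechanically outputs an equivalent positive existential $\Lcal_h$-sentence over $\Q$.

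Next, I would invoke the MRDP theorem \cite{Mat, DPR}, which gives a positive existential sentence over $\N$ whose truth cannot be decided by any algorithm (indeed, one obtains a recursively enumerable, non-recursive Diophantine set). Composing with the translation from the previous step, any decision procedure for the positive existential theory of $\Qfrak$ would yield one for the positive existential theory of $(\N;0,1,+,\times,=)$, a contradiction. This proves the first assertion of Theorem \ref{ThmMain2}.

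For the second assertion, I would observe that a positive existential $\Lcal_h$-sentence over $\Q$ can be put, by standard manipulations, into the shape of an existentially quantified conjunction of polynomial equations over $\Q$ together with finitely many height-comparison conditions $H_{m,n}$. Disjunctions of polynomial equations $f=0\vee g=0$ can be merged into $fg=0$; inequalities $x\ne 0$ can be rewritten positively as $\exists y\,(xy=1)$, which is valid over the field $\Q$ (this is the same trick invoked after the definition of $\Pcal_M$). Thus an algorithm that decides the existence of rational solutions to Diophantine systems with height comparison conditions would decide the positive existential theory of $\Qfrak$, contradicting the previous paragraph.

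The main obstacle in this argument is entirely located in Theorem \ref{ThmMain1}; once the interpretation is available, the passage from interpretability to undecidability is bookkeeping, and the only point requiring care is that ``positive existential interpretation'' is taken in the strong sense that allows the atomic relations $H_{m,n}$ to appear freely in the defining formulas, so that the translation remains within the positive existential fragment of $\Lcal_h$. Given this, no further Diophantine or model-theoretic input is needed.
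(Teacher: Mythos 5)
Your proposal is correct and follows exactly the same route as the paper: the paper deduces Theorem \ref{ThmMain2} from Theorem \ref{ThmMain1} via the transference principle (Proposition \ref{PropUnd}) together with the undecidability of the positive existential theory of $(\N;0,1,+,\times,=)$, and your unpacking of the interpretation/translation mechanism and the reduction of arbitrary positive existential $\Lcal_h$-sentences to systems with height comparisons is precisely the bookkeeping that proposition encapsulates.
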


Theorem \ref{ThmMain2} is a formal consequence of Theorem \ref{ThmMain1}; see Proposition \ref{PropUnd} below.  Let us also mention that, as the proof will show, one only needs the relations $H_{m,n}$ for $m,n\le 3$ to prove Theorems \ref{ThmMain1} and Theorem \ref{ThmMain2}.

The proof of Theorem \ref{ThmMain1} uses the theory of heights on elliptic curves to positive existentially interpret the structure $(\N;0,1,+,B(a,b),=)$ in $\Qfrak$, where $B(a,b)$ is the binary relation stating that $a$ and $b$ are consecutive squares; that is, there is $n$ with $a=n^2$ and $b=(n+1)^2$. This suffices, since it is easy to see that in this structure the multiplication function is positive existentially definable, see Lemma \ref{LemmaMult}.

%%%%%%%%%%%%%%%%%%%%%%%%%%%%%%%%%%%%%%
%%%%%%%%%%%%%%%%%%%%%%%%%%%%%%%%%%%%%%
%%%%%%%%%%%%%%%%%%%%%%%%%%%%%%%%%%%%%%
%%%%%%%%%%%%%%%%%%%%%%%%%%%%%%%%%%%%%%
%%%%%%%%%%%%%%%%%%%%%%%%%%%%%%%%%%%%%%
%%%%%%%%%%%%%%%%%%%%%%%%%%%%%%%%%%%%%%

\section{Background} 

%%%
%%%
%%%
\subsection{Interpretations} In this paragraph we recall the notion of interpretation. See for instance \cite{Hodges} for further details.

Let $\Lcal_1$ and $\Lcal_2$ be first order languages and let $\Mfrak_1$ and $\Mfrak_2$ be structures over these languages respectively, with base sets $M_1$ and $M_2$. An interpretation of rank $r$ of the structure $\Mfrak_1$ in $\Mfrak_2$ is the following data:
\begin{itemize}
\item[(i)] a formula $\phi_{\Lcal_1}$ over $\Lcal_2$ with $r$ free variables;
\item[(ii)] a function $\theta:\phi_{\Lcal_1}^{\Mfrak_2}\to M_1$ (the upper script $\Mfrak_2$ stands for the realization of a formula, as usual);
\item[(iii)] for each $s\in \Lcal_1$, a formula $\phi_s$ over $\Lcal_2$
\end{itemize}
satisfying the following conditions:
\begin{itemize}
\item[(a)] the function $\theta$ is surjective onto $M_1$;
\item[(b)] $\phi_c^{\Mfrak_2}\subseteq \phi_{\Lcal_1}^{\Mfrak_2}$ and $\theta^{-1}(c^{\Mfrak_1})=\phi_c^{\Mfrak_2}$ for each symbol of constant $c\in \Lcal_1$;
\item[(c)] $\phi_R^{\Mfrak_2}\subseteq (\phi_{\Lcal_1}^{\Mfrak_2})^n$ and $(\theta^n)^{-1}(R^{\Mfrak_1})=\phi_R^{\Mfrak_2}$ for each symbol of $n$-ary relation $R\in \Lcal_1$;
\item[(d)] $\phi_f^{\Mfrak_2}\subseteq (\phi_{\Lcal_1}^{\Mfrak_2})^{n+1}$ and $(\theta^{n+1})^{-1}(f^{\Mfrak_1})=\phi_f^{\Mfrak_2}$ for each symbol of $n$-ary function $f\in \Lcal_1$.
\end{itemize}
Here, $\theta^k:(\phi_{\Lcal_1}^{\Mfrak_2})^k\to M_1^k$ stands for the map obtained from $k$ copies of $\theta$.

If all the formulas $\phi_s$ for $s\in \{\Lcal_1\}\cup\Lcal_1$ are positive existential, we say that the interpretation is positive existential. For simplicity, the interpretation will be simply called $\theta$ if no confusion can occur. 

The relevance of interpretations for our purposes is the following standard application of interpretability:

\begin{proposition}[Transference of undecidability by interpretations]\label{PropUnd} Suppose that there is an interpretation of $\Mfrak_1$ in $\Mfrak_2$. If the first order theory of $\Mfrak_1$ is undecidable, then so is the first order theory of $\Mfrak_2$. Furthermore, if the interpretation is positive existential and if the positive existential theory of $\Mfrak_1$ is undecidable, then the positive existential theory of $\Mfrak_2$ is undecidable.
\end{proposition}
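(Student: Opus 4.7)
The plan is to construct an effective translation $\psi \mapsto \psi^*$ from $\Lcal_1$-formulas to $\Lcal_2$-formulas which mirrors the interpretation in the sense that, for every tuple $\vec{a}$ of elements of $\phi_{\Lcal_1}^{\Mfrak_2}$,
$$
\Mfrak_2 \models \psi^*(\vec{a}) \iff \Mfrak_1 \models \psi(\theta(\vec{a})).
$$
Once the existence of such a computable translation is established, both parts of the proposition follow at once: given a decision procedure for the (positive existential) theory of $\Mfrak_2$, we obtain one for $\Mfrak_1$ by applying $(\cdot)^*$ to the input sentence and querying the assumed oracle, contradicting the undecidability hypothesis on $\Mfrak_1$.

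The translation is defined by recursion on the syntax. The delicate case is that of terms, since functions and constants of $\Lcal_1$ are interpreted through formulas describing their graphs, not through $\Lcal_2$-terms. Concretely, a composite term $f(t_1,\ldots,t_n)$ appearing inside an atomic formula is handled by introducing fresh existentially quantified variables: for each subterm one adds a conjunct of the form $\phi_f(\tilde{y}_1,\ldots,\tilde{y}_n,y)$ and replaces the term by the placeholder $y$; constants $c$ are treated similarly via $\phi_c$. Atomic formulas $R(t_1,\ldots,t_n)$ (including equality, which by assumption occurs among the relation symbols of $\Lcal_1$) are then translated using $\phi_R$ composed with the translated terms. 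Boolean connectives are preserved verbatim, while quantifiers are relativized to $\phi_{\Lcal_1}$, so that $\exists x\, \psi$ becomes $\exists x\, (\phi_{\Lcal_1}(x) \wedge \psi^*)$ and $\forall x\, \psi$ becomes $\forall x\, (\phi_{\Lcal_1}(x) \to \psi^*)$. The target equivalence is then verified by induction on $\psi$, with the atomic step using conditions (b)--(d) of the definition of interpretation and the quantifier step using the surjectivity of $\theta$ from condition (a).

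For the positive existential part of the statement, one observes that if all defining formulas $\phi_s$ are positive existential and $\psi$ itself is positive existential, then the construction yields a positive existential $\psi^*$: each step either preserves the positive existential fragment (disjunction, conjunction) or introduces only existential quantifiers together with a conjunction (the term translation step and the relativization of $\exists$), and neither universal quantifiers nor negations appear. I do not expect a serious obstacle; the only point requiring care is the bookkeeping of free variables and placeholder variables during the recursive term translation, together with verifying that the introduced placeholders lie in $\phi_{\Lcal_1}^{\Mfrak_2}$, which is guaranteed by condition (d).
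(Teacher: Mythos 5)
Your proof is correct and is the standard translation argument that the paper itself omits (citing Hodges): you build a computable map $\psi\mapsto\psi^*$ by unnesting function and constant terms via existential quantifiers and the graph formulas $\phi_f,\phi_c$, translating atoms (including equality) through $\phi_R$, preserving Boolean connectives, and relativizing quantifiers to $\phi_{\Lcal_1}$, then verify $\Mfrak_2\models\psi^*(\vec a)\iff\Mfrak_1\models\psi(\theta(\vec a))$ by induction using conditions (a)--(d). Your observation that the translation stays within the positive existential fragment when every $\phi_s$ is positive existential is exactly what is needed for the second assertion, so the argument is complete.
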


%%%
%%%
%%%
\subsection{Elliptic curves} We need a couple of facts regarding heights on elliptic curves. These results are standard and can be found, for instance, in \cite{Silverman}.

Let $\Ecal$ be an elliptic curve over $\Q$ given by a fixed Weierstrass equation. Then the point at infinity $P_0$ can be taken as the neutral element of the group of rational points $\Ecal(\Q)$, and the $x$-coordinate defines a morphism $\pi: \Ecal\to \Pro^1$ that maps $\Ecal-\{P_0\}$ to the affine line $\A^1$.

For $P\in \Ecal(\Q)$ define the naive height
$$
h_\pi(P)=\begin{cases}
0&\mbox{ if }P=P_0\\
h(\pi(P))&\mbox{ if }P\ne P_0.
\end{cases}
$$
Given an integer $n$ the multiplication-by-$n$ map on $\Ecal$ is denoted by $[n]$. The limit
$$
\hat{h}(P):=\lim_{k\to \infty} \frac{h_\pi([2^k]P)}{4^k}
$$
exists and it defines a function $\hat{h}:\Ecal(\Q)\to \R_{\ge 0}$ called the \emph{canonical height} (some authors normalize it by dividing by $2$.) 

\begin{proposition}[Properties of the canonical height]\label{LemmaBasicE} The canonical height has the following properties:
\begin{itemize}
\item[(i)] There is a constant $c_\Ecal>0$ depending only on $\Ecal$ such that for all $P\in \Ecal(\Q)$ we have
$$
|\hat{h}(P)-h_\pi(P)|< c_\Ecal.
$$
\item[(ii)] For every $n\in \Z$ and $P\in \Ecal(\Q)$ we have $\hat{h}([n]P)=n^2\hat{h}(P)$.
\item[(iii)] For $P\in \Ecal(\Q)$, we have $\hat{h}(P)=0$ if and only if $P$ is a torsion point. 
\end{itemize}
\end{proposition}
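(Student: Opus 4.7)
The plan is to derive all three properties from a single input: a quasi-functoriality estimate for the naive height under multiplication-by-$n$. Concretely, for each integer $n$, the $x$-coordinate of $[n]P$ is a rational function of degree $n^2$ in the $x$-coordinate of $P$ (reflecting the linear equivalence $[n]^*(P_0) \sim n^2\cdot P_0$ on $\Ecal$, together with the fact that $[n]$ commutes with the hyperelliptic involution $[-1]$). Combining this with the basic inequality that a degree-$d$ morphism $\Pro^1\to\Pro^1$ changes the logarithmic height by $d$ up to an $O(1)$ error, one obtains a constant $C_{\Ecal,n}>0$, depending only on $\Ecal$ and $n$, such that
$$
|h_\pi([n]P) - n^2 h_\pi(P)| < C_{\Ecal,n} \quad\text{for all } P \in \Ecal(\Q).
$$
Write $C_\Ecal$ for the constant in the case $n=2$.

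For part (i), I would feed the $n=2$ bound into a telescoping argument. Setting $a_k = h_\pi([2^k]P)/4^k$ and applying the estimate with $Q = [2^k]P$ gives $|a_{k+1}-a_k| < C_\Ecal/4^{k+1}$, so $(a_k)$ is Cauchy (hence the limit defining $\hat h(P)$ exists, which also justifies the definition) and summing the geometric tail produces $|\hat h(P) - h_\pi(P)| \le C_\Ecal/3$, establishing (i) with any $c_\Ecal > C_\Ecal/3$.

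For part (ii) with $n \ge 0$, I would plug the quasi-functoriality bound for $[n]$ into the defining limit. Since $[n]$ commutes with $[2^k]$,
$$
\hat h([n]P) = \lim_{k\to\infty}\frac{h_\pi([2^k][n]P)}{4^k} = \lim_{k\to\infty}\frac{n^2 h_\pi([2^k]P) + O(1)}{4^k} = n^2\hat h(P),
$$
where the $O(1)$ is uniform in $k$. The case $n<0$ reduces to $n>0$ because $[-1]$ preserves $x$-coordinates, hence $h_\pi$ and $\hat h$. Part (iii) then follows quickly: if $P$ is torsion of order $N$, (ii) gives $N^2\hat h(P) = \hat h(P_0) = 0$, so $\hat h(P) = 0$. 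Conversely, if $\hat h(P) = 0$, (ii) forces $\hat h([m]P)=0$ for every $m\in\Z$, and (i) bounds $h_\pi([m]P) < c_\Ecal$ uniformly in $m$; Northcott's finiteness theorem for points of bounded height on $\Pro^1(\Q)$ shows that the orbit $\{[m]P:m\in\Z\}$ meets only finitely many fibers of $\pi$, each of size at most two, and hence $P$ has finite order.

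The main obstacle is the quasi-functoriality estimate itself; everything else is a short formal manipulation. In an elementary treatment one checks it directly from the explicit duplication (and, more generally, division-polynomial) formulas on a Weierstrass model. A cleaner conceptual route applies the Weil height machine to the linear equivalence $[n]^*(P_0)\sim n^2\cdot P_0$ on $\Ecal$, which packages the bound as a formal consequence of the functorial properties of heights on projective space. Either way the result is classical and can be cited from the references on heights of elliptic curves given just above the statement.
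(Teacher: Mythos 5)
Your argument is correct and is the standard proof: the paper itself offers no proof of this proposition, instead citing Silverman's book, and the quasi-functoriality-plus-telescoping / Northcott argument you give is precisely the one found there. The only thing worth flagging is that the base case $\hat h(P_0)=0$ (needed implicitly in both (ii) for $n=0$ and (iii)) should be noted, though it is immediate from $[2^k]P_0=P_0$ and $h_\pi(P_0)=0$.
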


%%%%%%%%%%%%%%%%%%%%%%%%%%%%%%%%%%%%%%
%%%%%%%%%%%%%%%%%%%%%%%%%%%%%%%%%%%%%%
%%%%%%%%%%%%%%%%%%%%%%%%%%%%%%%%%%%%%%
%%%%%%%%%%%%%%%%%%%%%%%%%%%%%%%%%%%%%%
%%%%%%%%%%%%%%%%%%%%%%%%%%%%%%%%%%%%%%
%%%%%%%%%%%%%%%%%%%%%%%%%%%%%%%%%%%%%%

\section{Preliminary results}

%%%
%%%
%%%
\subsection{Sum of heights} 

We will need the following elementary height formula:

\begin{lemma}[Addition of heights]\label{LemmaHsum} Let $q_1,q_2\in \Q$. Then
$$
h(q_1)+h(q_2)=h_3(q_1,q_2,q_1q_2).
$$
\end{lemma}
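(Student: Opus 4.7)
The plan is to decompose both sides into local contributions indexed by the places of $\Q$ and then reduce to a simple local identity. For each prime $p$ let $|\cdot|_p$ denote the $p$-adic absolute value on $\Q$ normalized so that $|p|_p=p^{-1}$, and let $|\cdot|_\infty$ denote the usual absolute value, so that the product formula $\prod_v |q|_v=1$ holds for $q\in\Q^\times$.

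First I would rewrite $h(q) = \sum_v \log \max(1,|q|_v)$: for $q=a/b$ in lowest terms, the denominator $b$ contributes $\sum_p \max(0,-v_p(q))\log p = \log b$ and the archimedean contribution is $\log\max(1,|a/b|)$; the sum equals $\log\max(|a|,b)$. I would then establish the analogous formula
$$
h_3(q_1,q_2,q_1q_2) = \sum_v \log\max(1,|q_1|_v,|q_2|_v,|q_1q_2|_v).
$$
To see this, note that the common denominator $d$ in the definition of $h_3$ satisfies $v_p(d)=\max(0,-v_p(q_1),-v_p(q_2),-v_p(q_1q_2))$ for every prime $p$, which accounts for the non-archimedean terms; meanwhile the archimedean term gives exactly $\log\max(d,|dq_1|,|dq_2|,|dq_1q_2|)-\log d$. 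Equivalently, $(d,dq_1,dq_2,dq_1q_2)$ is a quadruple of integers with trivial gcd, so $h_3(q_1,q_2,q_1q_2)$ is the projective height of $[1:q_1:q_2:q_1q_2]\in\Pro^3(\Q)$.

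The key step is a purely local identity: for every absolute value $|\cdot|$ on $\Q$ (archimedean or $p$-adic) and every $x,y\in\Q$,
$$
\max(1,|x|,|y|,|xy|) = \max(1,|x|)\cdot\max(1,|y|).
$$
This is a one-line case analysis according to whether each of $|x|$ and $|y|$ is $\le 1$ or $>1$, using only the multiplicativity $|xy|=|x||y|$. Applying it with $x=q_1$, $y=q_2$ at each place $v$ and summing the logarithms gives
$$
h_3(q_1,q_2,q_1q_2) = \sum_v \log\max(1,|q_1|_v) + \sum_v \log\max(1,|q_2|_v) = h(q_1)+h(q_2).
$$

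I don't foresee a serious obstacle; the only care needed is the translation between the single-denominator definition of $h_3$ given in the paper and the adelic reformulation above, which is routine. One could alternatively give a fully elementary proof by writing $q_i=a_i/b_i$ in lowest terms and computing the reduced form of $q_1q_2$ via $\gcd(a_1,b_2)$ and $\gcd(a_2,b_1)$, but this is notationally heavier and less conceptual than the local-global approach.
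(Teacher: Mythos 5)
Your proof is correct and takes essentially the same route as the paper: both decompose $h$ and $h_3$ into sums of local terms $\log\max(1,|\cdot|_v)$ over all places, and both rest on the pointwise identity $\max(1,|x|,|y|,|xy|)=\max(1,|x|)\cdot\max(1,|y|)$, which is precisely the middle step of the paper's displayed computation. The extra care you take in justifying the adelic formula for $h_3$ via the common denominator matches the paper's brief remark that the formula follows from the product formula applied to $d$.
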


For the proof it will be convenient to have an alternative expression for the height. For a place $v$ of $\Q$ we let $|\cdot |_v$ be the standard absolute value attached to $v$ (in the $p$-adic case, normalized by $|p|_p=1/p$.) For $q\in \Q^\times$ the product formula yields
$$
\sum_v \log |q|_v =0.
$$
Then for $q_1,...,q_n\in \Q$ one has
$$
h_n(q_1,...,q_n)=\sum_v \log \max\{1,|q_1|_v,...,|q_n|_v\}.
$$
In fact, this is an easy consequence of the product formula: one recovers the formula defining $h_n$ upon adding $\sum_v \log |d|_v=0$ where $d$ is the least common denominator of the $q_j$. 

With this at hand we can prove the previous lemma.

\begin{proof}[Proof of Lemma \ref{LemmaHsum}] Decomposing the height over all places $v$ of $\Q$ we get
$$
\begin{aligned}
h_3(q_1,q_2,q_1q_2) &= \sum_v \log \max\{1,|q_1|_v,|q_2|_v,|q_1q_2|_v\}\\
&=\sum_v \log\left(\max\{1,|q_1|_v\}\cdot \max\{1,|q_2|_v\}\right)\\
&=h(q_1)+h(q_2).
\end{aligned}
$$
\end{proof}

%%%
%%%
%%%
\subsection{Some height equalities} Recall that the structure $\Qfrak$ consists of the field $\Q$ over the language of rings expanded by the relation symbols $H_{m,n}$ for height comparisons; this expanded language is denoted by $\Lcal_h$. We need to positive existentially define some additional height relations in this structure.

First we have equality of heights. We define the relation $E_{m,n}$ on $\Q^m\times \Q^n$ as follows:
$$
E_{m,n}({\bf x},{\bf y}):\quad h_m({\bf x})= h_n({\bf y}), \quad \mbox{ for }{\bf x}\in \Q^m\mbox{ and } {\bf y}\in \Q^n.
$$

\begin{lemma}[Equality of heights]\label{LemmaEquality} The relations $E_{m,n}$ are positive existentially definable in the $\Lcal_h$-structure $\Qfrak$. 
\end{lemma}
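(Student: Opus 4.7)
The plan is immediate from the definitions. Equality of real numbers $h_m(\mathbf{x}) = h_n(\mathbf{y})$ is equivalent to the conjunction of the two inequalities $h_m(\mathbf{x}) \le h_n(\mathbf{y})$ and $h_n(\mathbf{y}) \le h_m(\mathbf{x})$. Since $H_{m,n}$ and $H_{n,m}$ are both symbols of the language $\Lcal_h$, interpreted on $\Qfrak$ precisely as these height inequalities, the defining formula is simply
\[
\varphi(\mathbf{x}, \mathbf{y}) \;:\; H_{m,n}(\mathbf{x}, \mathbf{y}) \wedge H_{n,m}(\mathbf{y}, \mathbf{x}).
\]
This is a positive quantifier-free formula over $\Lcal_h$, hence in particular positive existential, and its realization in $\Qfrak$ is exactly the relation $E_{m,n}$.

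So the only step to carry out is to verify that $\varphi^{\Qfrak} = E_{m,n}$, which follows directly from antisymmetry of $\le$ on $\R$. There is no genuine obstacle here; the lemma is included only to have the $E_{m,n}$ available as positive existentially definable shorthands in the subsequent sections, where height equalities (rather than inequalities) arise naturally from applying Proposition \ref{LemmaBasicE}(ii) to canonical heights on elliptic curves.
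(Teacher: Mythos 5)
Your proof is correct and coincides exactly with the paper's: both define $E_{m,n}$ by the quantifier-free formula $H_{m,n}(\mathbf{x},\mathbf{y})\wedge H_{n,m}(\mathbf{y},\mathbf{x})$, relying on antisymmetry of $\le$. Nothing further to note.
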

\begin{proof} Indeed, $E_{m,n}({\bf x},{\bf y})$ is defined by the quantifier-free formula $H_{m,n}({\bf x},{\bf y})\wedge H_{n,m}({\bf y},{\bf x})$.
\end{proof}

Using this and Lemma \ref{LemmaHsum}, we deduce:

\begin{lemma}[Defining the sum of heights]\label{LemmaDefSum} The ternary relation $S(x,y,z)$ on $\Q^3$ defined by
$$
h(x)+h(y)=h(z)
$$
is positive existentially $\Lcal_h$-definable.
\end{lemma}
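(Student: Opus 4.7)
The plan is to chain the two preceding lemmas. By Lemma \ref{LemmaHsum}, we have the identity $h(x)+h(y)=h_3(x,y,xy)$ for all $x,y\in\Q$. Consequently, the relation $S(x,y,z)$, stating that $h(x)+h(y)=h(z)$, is equivalent to the single height equation $h_3(x,y,xy)=h_1(z)$, which is exactly an instance of the relation $E_{3,1}((x,y,xy),z)$.

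To turn this into a positive existential $\Lcal_h$-formula, first I would introduce an existentially quantified variable $w$ naming the product $xy$, since the multiplication symbol $\times$ belongs to $\Lcal_h$; that is, I would write
\[
\exists w\,\bigl((w = x\times y)\wedge E_{3,1}\bigl((x,y,w),z\bigr)\bigr).
\]
Then I would invoke Lemma \ref{LemmaEquality}, which guarantees that $E_{3,1}$ itself admits a positive existential definition in $\Qfrak$ (as a conjunction of two instances of the primitive symbols $H_{m,n}$). Substituting this definition in place of $E_{3,1}$ gives a positive existential formula in $\Lcal_h$ that defines $S$.

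There is no substantive obstacle: the whole point of Lemma \ref{LemmaHsum} is to reduce addition of heights to a single height of a longer tuple, and once this reduction is available, all that is left to do is to note that both multiplication of rational numbers and equality of heights are already accessible in $\Lcal_h$. The verification in the paper should therefore amount to a one-line derivation exhibiting the displayed formula.
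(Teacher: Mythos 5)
Your proof is correct and matches the paper's proof essentially verbatim: both reduce $S(x,y,z)$ to the height-equality relation $E_{m,n}$ applied to $(x,y,xy)$ and $z$ via Lemma \ref{LemmaHsum}, then invoke Lemma \ref{LemmaEquality}. The auxiliary variable $w=xy$ you introduce is harmless but unnecessary, since $xy$ is already a term of $\Lcal_h$.
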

\begin{proof} By Lemma \ref{LemmaEquality} the expression $E_{1,3}(z,(x,y,xy))$ gives a positive existentially $\Lcal_h$-definable ternary relation. We note that it defines $S(x,y,z)$; indeed, by Lemma \ref{LemmaHsum} we have $h_3(x,y,xy)=h(x)+h(y)$ and the claim follows.
\end{proof}

%%%
%%%
%%%
\subsection{Approximate height relations} We need to allow comparison of heights with bounded errors. For this, first we have:

\begin{lemma}[Approximate height comparison, version 1]\label{LemmaApprox1} There is a positive existentially $\Lcal_h$-definable binary relation $A(x,y)$ on $\Q^2$ with the following properties: given $(x,y)\in \Q^2$ we have
\begin{itemize}
\item[(i)] if  $h(x)\le h(y)+1$, then $A(x,y)$ holds; and
\item[(ii)] if $A(x,y)$ holds, then $h(x)\le h(y)+2$.
\end{itemize}
\end{lemma}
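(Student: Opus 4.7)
The plan is to exploit the arithmetic fact that $1 < \log 3 < 2$, using $h(3) = \log 3$ as an approximate additive unit that overshoots $1$ but stays below $2$. Since Lemma \ref{LemmaDefSum} already provides a positive existentially $\Lcal_h$-definable ternary relation $S$ expressing $h(u) + h(v) = h(w)$, and $H_{1,1}$ is a symbol of $\Lcal_h$, the natural candidate is
$$
A(x,y)\;:\;\exists z\,\bigl(S(y,\,1+1+1,\,z)\wedge H_{1,1}(x,z)\bigr),
$$
where the term $1+1+1$ denotes $3 \in \Q$ in $\Qfrak$. This formula is positive existential over $\Lcal_h$. Semantically, $A(x,y)$ asserts the existence of $z \in \Q$ with $h(z) = h(y) + \log 3$ and $h(x) \le h(z)$.

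Property (ii) is then immediate: any witness $z$ gives $h(x) \le h(z) = h(y) + \log 3 < h(y) + 2$. For property (i), suppose $h(x) \le h(y) + 1$; I would produce an explicit witness as follows. Write $y = a/b$ in lowest terms (with $y = 0$ represented as $0/1$) and set $z := 3\cdot\max\{|a|,|b|\} \in \Z_{\ge 1}$. Then $h(z) = \log\bigl(3\max\{|a|,|b|\}\bigr) = h(y) + \log 3$, so $S(y, 3, z)$ holds, and $h(x) \le h(y) + 1 < h(y) + \log 3 = h(z)$ gives $H_{1,1}(x,z)$. Thus $A(x,y)$ is satisfied.

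I do not anticipate a real obstacle. The entire content is the choice of a constant $c$ that is realizable as the height of a closed term and satisfies $1 < c < 2$, which $c = \log 3 = h(1+1+1)$ supplies. The only thing to verify carefully is that a witness $z$ of height exactly $h(y) + \log 3$ exists for every $y \in \Q$, and this is clear from the explicit choice $z = 3\max\{|a|,|b|\}$.
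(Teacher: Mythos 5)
Your proof is correct, and it takes a genuinely different route from the paper's. The paper works with the set $J=\{q\in\Q:1\le q\le 2\}$ of height-equivalent representatives (defined positive existentially via Lagrange's four squares theorem), picks $q\in J$ with $h(q)=h(y)$ via $E_{1,1}$, and then exploits the estimate $h(q)+1<h(q+5)<h(q)+2$; so it realizes a shift lying in an \emph{interval} strictly between $1$ and $2$. You instead invoke the already-available relation $S$ from Lemma~\ref{LemmaDefSum} to demand a witness $z$ of \emph{exact} height $h(y)+\log 3$, and then only need the single inequality $1<\log 3<2$. This is somewhat cleaner: it gives a one-line verification of~(ii), the existence of a witness for~(i) is by explicit construction ($z=3\max\{|a|,|b|\}$ for $y=a/b$ in lowest terms), and you avoid the four-squares machinery entirely, since $S$ is quantifier-free ($S(x,y,z)$ is just $E_{1,3}(z,(x,y,xy))$). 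The paper's choice of $J$ is likely because the same representative-set technique is reused in the proof of Lemma~\ref{LemmaLess}, and your construction could just as well be adapted there (e.g.\ demanding $h(z)=h(x)+\log N$ for a fixed integer $N$ with $10<\log N<11$). Both approaches are sound.
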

\begin{proof}
We first note that the set $J=\{q\in \Q : 1\le q\le 2\}$ has the property that for every $r\in \Q$ there is $q\in J$ with $h(r)=h(q)$. Indeed, $h(1)=0$ and for every integer $n\ge 2$ we have $h(n/(n-1))=\log n$. Furthermore, by Lagrange's four squares theorem, the set $J$ is positive existentially definable in $\Q$ over the language of rings.

We claim that for all $q\in J$ we have
$$
h(q)+\log(7/2)\le h(q+5)\le h(q)+\log(6).
$$
Indeed, write $q=a/b$ with $a\ge b$ coprime positive integers so that $h(q)=\log a$. Since $q+5=(a+5b)/b$ with $\gcd(b,a+5b)=1$, we have $h(q+5)=\log(a+5b)$. As $q\in J$, we have $a/2\le b\le a$ which gives $7a/2\le a+5b\le 6a$, and the claim follows.

In particular, for all $q\in J$ we get
$$
h(q)+1< h(q+5)< h(q)+2.
$$
Finally, we take $A(x,y)$ as the relation defined by the following expression, which is positive existentially $\Lcal_h$-definable:
$$
\exists q,  (q\in J)\wedge E_{1,1}(y,q)\wedge H_{1,1}(x,q+5).
$$
\end{proof}

\begin{lemma}[Approximate height comparison, version 2]\label{LemmaApprox2} Let $M\ge 1$ be an integer. There is a positive existentially $\Lcal_h$-definable binary relation $A^M(x,y)$ on $\Q^2$ with the following properties: given $(x,y)\in \Q^2$ we have
\begin{itemize}
\item[(i)] if  $h(x)\le h(y)+M$, then $A^M(x,y)$ holds; and
\item[(ii)] if $A^M(x,y)$ holds, then $h(x)\le h(y)+4M$.
\end{itemize}
\end{lemma}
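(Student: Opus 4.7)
The plan is to define $A^M$ by chaining $M$ copies of the relation $A$ from Lemma \ref{LemmaApprox1}. Concretely, I take $A^M(x,y)$ to be the positive existential $\Lcal_h$-formula
$$
\exists z_1,\ldots, z_M,\ A(z_1,y)\wedge A(z_2,z_1)\wedge \cdots \wedge A(z_M,z_{M-1})\wedge H_{1,1}(x,z_M).
$$

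For property (ii), I apply Lemma \ref{LemmaApprox1}(ii) to each link: assuming $A^M(x,y)$ holds, the bounds $h(z_i)\le h(z_{i-1})+2$ (with the convention $z_0:=y$) combine telescopically to $h(z_M)\le h(y)+2M$, and $H_{1,1}(x,z_M)$ then yields $h(x)\le h(y)+2M\le h(y)+4M$. For property (i), suppose $h(x)\le h(y)+M$; using the set $J=\{q\in\Q:1\le q\le 2\}$ from the proof of Lemma \ref{LemmaApprox1} together with the fact established there that every rational has a height-equivalent element in $J$, I build the witnesses inductively by picking $q_i\in J$ with $h(q_i)=h(z_{i-1})$ and setting $z_i:=q_i+5$. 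The estimate from that proof gives $h(z_i)=h(q_i+5)\ge h(q_i)+\log(7/2)=h(z_{i-1})+\log(7/2)$, and $A(z_i,z_{i-1})$ holds with witness $q=q_i$. Iterating, $h(z_M)\ge h(y)+M\log(7/2)>h(y)+M\ge h(x)$, which is precisely $H_{1,1}(x,z_M)$.

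The argument is essentially routine once Lemma \ref{LemmaApprox1} is available. The one thing to keep an eye on is that heights take values only in the discrete set $\{\log n:n\in\Z_{\ge 1}\}$, so intermediate rationals of an arbitrary prescribed height do not exist; however, the slack $\log(7/2)>1$ per step that is built into Lemma \ref{LemmaApprox1} absorbs this discreteness, and the gap between $M$ in the hypothesis and $4M$ in the conclusion leaves ample room, since the naive chaining in fact delivers the tighter bound $2M$.
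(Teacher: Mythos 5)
Your proposal is correct. It differs from the paper's construction in a small but genuine way. The paper defines $A^M(x,y)$ by chaining $2M$ copies of $A$ directly between $x$ and $y$, namely $\exists t_1,\ldots,t_{2M-1},\ A(x,t_1)\wedge A(t_1,t_2)\wedge\cdots\wedge A(t_{2M-1},y)$, and for property (i) it chooses $t_j=2^{2M-j}v$ (with $v$ the integer realizing $h(y)$) so that each consecutive pair differs in height by exactly $\log 2<1$. You instead chain only $M$ copies of $A$ ascending from $y$, then cap with a single $H_{1,1}(x,z_M)$; for (i) you exploit that each $A$-step realized via $q_i\mapsto q_i+5$ raises the height by at least $\log(7/2)>1$, so after $M$ steps the chain has climbed past $h(y)+M\ge h(x)$. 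Both rely on the same basic slack built into Lemma \ref{LemmaApprox1}. Your version is a bit more economical ($M$ rather than $2M-1$ existential quantifiers) and even yields the sharper bound $h(x)\le h(y)+2M$ in (ii), while the paper's $2M$-link chain gives precisely the stated $4M$. Neither improvement is needed later, but the argument is sound.
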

\begin{proof} We let $A^1=A$ with $A$ the relation provided by Lemma \ref{LemmaApprox1}. 

For $M\ge 2$ we can take $A^M(x,y)$ as
$$
\exists t_1,...t_{2M-1}, A(x,t_1)\wedge A(t_1,t_2)\wedge\cdots \wedge A(t_{2M-1},y).
$$

The required item (ii) is clear from the properties of $A(x,y)$. 

For (i), suppose that $h(x)\le h(y)+M$. Let $u,v$ be positive integers with $h(x)=\log (u)$ and $h(y)=\log (v)$, so that $\log(u)\le \log(v) +M$, that is, $u\le e^Mv$. For $j=1,2,...,2M-1$ choose $t_j=2^{2M-j}v$. Then:
\begin{itemize}
\item Note that $u\le e^Mv< 4^Mv=2t_1$. So, $\log(u)< \log(t_1)+\log 2<\log (t_1) +1$ and $A(u,t_1)$ holds. This means that $A(x,t_1)$ holds.
\item For $j=1,2,...,2M-2$ we have $t_j=2t_{j+1}$. Hence, $\log(t_j)=\log(t_{j+1})+\log 2< \log (t_{j+1})+1$ and $A(t_{j},t_{j+1})$ holds.
\item Finally, $t_{2M-1}=2v$ and we get $\log(t_{2M-1})=\log(v)+\log 2< \log (v)+1$. Therefore $A(t_{2M-1},v)$ holds. This means that $A(t_{2M-1},y)$ holds.
\end{itemize}
In this way we see that $A^M(x,y)$ holds, by choosing $t_j$ as above.
\end{proof}

The previous lemma allows us to positive existentially define approximate equality of heights.

\begin{lemma}[Approximate equality of heights]\label{LemmaApproxE} Let $M\ge 1$ be an integer. There is a positive existentially $\Lcal_h$-definable binary relation $E^M(x,y)$ on $\Q^2$ with the following properties: given $(x,y)\in \Q^2$ we have
\begin{itemize}
\item[(i)] if $|h(x)-h(y)|\le M$ then $E^M(x,y)$ holds; and
\item[(ii)] if $E^M(x,y)$ holds, then $|h(x)-h(y)|\le 4M$.
\end{itemize}

\end{lemma}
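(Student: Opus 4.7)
The plan is to define $E^M(x,y)$ as the conjunction $A^M(x,y) \wedge A^M(y,x)$, where $A^M$ is the approximate comparison relation furnished by Lemma \ref{LemmaApprox2}. Since each $A^M$ is positive existentially $\Lcal_h$-definable, so is the conjunction, and hence $E^M$ is positive existentially $\Lcal_h$-definable as required.

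Verification of (i): assume $|h(x)-h(y)|\le M$. Then in particular $h(x)\le h(y)+M$ and $h(y)\le h(x)+M$, so applying the implication (i) of Lemma \ref{LemmaApprox2} to each inequality gives that both $A^M(x,y)$ and $A^M(y,x)$ hold, whence $E^M(x,y)$ holds.

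Verification of (ii): assume $E^M(x,y)$ holds, i.e., both $A^M(x,y)$ and $A^M(y,x)$ hold. Applying implication (ii) of Lemma \ref{LemmaApprox2} to each, we obtain $h(x)\le h(y)+4M$ and $h(y)\le h(x)+4M$, i.e., $|h(x)-h(y)|\le 4M$.

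There is no real obstacle here; the construction is a direct symmetrization of the one-sided approximate comparison, and the bookkeeping for the constants $M$ versus $4M$ is inherited verbatim from Lemma \ref{LemmaApprox2}.
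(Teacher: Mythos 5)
Your proposal is correct and matches the paper's proof exactly: both define $E^M(x,y)$ as $A^M(x,y)\wedge A^M(y,x)$ and derive the two-sided bounds from the one-sided ones in Lemma \ref{LemmaApprox2}. You simply spell out the verification that the paper leaves implicit.
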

\begin{proof}
The required relation can be defined by the expression
$$
A^M(x,y)\wedge A^M(y,x)
$$ 
which, by Lemma \ref{LemmaApprox2}, is positive existentially $\Lcal_h$-definable. 
\end{proof}

Finally, we need a definition for strict inequality of heights. We only need it in a very particular case, which is what we prove now:

\begin{lemma}[Strict inequality of heights]\label{LemmaLess} There is a positive existentially $\Lcal_h$-definable binary relation $L(x,y)$ on $\Q^2$ with the following properties: given $(x,y)\in \Q^2$ we have
\begin{itemize}
\item[(i)] if $h(x)+11\le h(y)$ then $L(x,y)$ holds; and
\item[(ii)] if $L(x,y)$ holds, then $h(x)+10\le h(y)$.
\end{itemize}
\end{lemma}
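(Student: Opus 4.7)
The plan is to encode the strict inequality via an existentially quantified intermediate rational of fixed intermediate height, combined with Lemma \ref{LemmaHsum}. First I will pick a specific rational constant $c_0$ whose height lies in the interval $[10,11]$. Since the set of heights attained on $\Q$ equals $\{\log n : n\in\Z_{\ge 1}\}$, which is a discrete unbounded subset of $\R_{\ge 0}$ that meets $[10,11]$, such a $c_0$ exists — for instance, $c_0 = 2^{15} = 32768$ satisfies $h(c_0) = 15\log 2 \approx 10.397$, and $c_0$ is expressible as a closed $\Lcal_h$-term (by iterated additions of $1$ and a multiplication).

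With $c_0$ fixed, I define
$$
L(x,y):\quad \exists z\,\big(H_{1,1}(c_0,z)\ \wedge\ H_{3,1}\big((x,z,x\cdot z),\,y\big)\big),
$$
which is manifestly positive existential in $\Lcal_h$. By Lemma \ref{LemmaHsum}, the second conjunct is equivalent to $h(x)+h(z)\le h(y)$, while the first conjunct forces $h(c_0)\le h(z)$.

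To verify item (ii), I note that any witness $z$ yields
$$
h(x)+10\ \le\ h(x)+h(c_0)\ \le\ h(x)+h(z)\ \le\ h(y).
$$
To verify item (i), whenever $h(x)+11\le h(y)$, I will take $z=c_0$: then $H_{1,1}(c_0,z)$ holds trivially, and by Lemma \ref{LemmaHsum} one has $h_3(x,c_0,x\cdot c_0)=h(x)+h(c_0)\le h(x)+11\le h(y)$, giving the second conjunct.

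The only real obstacle is the choice of $c_0$, and this is what dictates the specific constants in the statement: the gap $11-10=1$ between the sufficient and necessary conditions is precisely the room needed to guarantee the existence of a rational $c_0$ with $h(c_0)\in[10,11]$ (since heights are discretely distributed on $\R_{\ge 0}$). Any gap strictly larger than the maximal ratio between consecutive integers in this range would suffice, so the pair $(11,10)$ is a convenient, not intrinsic, choice.
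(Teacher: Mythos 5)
Your proof is correct. Let me quickly verify the two directions. For (i): if $h(x)+11\le h(y)$, take $z=c_0=2^{15}$; then $H_{1,1}(c_0,z)$ trivially holds, and by Lemma~\ref{LemmaHsum}, $h_3(x,c_0,xc_0)=h(x)+h(c_0)\le h(x)+11\le h(y)$ since $h(c_0)=15\log 2\approx 10.397<11$, so $H_{3,1}((x,c_0,xc_0),y)$ holds. For (ii): any witness $z$ gives $h(x)+10< h(x)+h(c_0)\le h(x)+h(z)=h_3(x,z,xz)\le h(y)$, using $h(c_0)>10$. Both directions check out, and the formula is manifestly positive existential over $\Lcal_h$ (with $c_0$ a closed term in $0,1,+,\times$).

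Your route is genuinely different from the paper's, though both are elementary and hinge on producing a quantity of fixed height sitting strictly inside $[10,11]$. The paper first normalizes $x$ to an element $q$ of $J=\{q\in\Q: 1\le q\le 2\}$ of the same height via $E_{1,1}$, and then observes (by an argument analogous to Lemma~\ref{LemmaApprox1}, writing $q=a/b$ with $a\ge b\ge a/2$) that the arithmetic shift $q\mapsto q+50000$ raises the height by an amount between $\log(25001)>10$ and $\log(50001)<11$; the condition is then $H_{1,1}(q+50000,y)$. You instead bypass the normalization set $J$ entirely: you fix a single explicit constant $c_0$ with $h(c_0)\in(10,11)$, encode the height sum $h(x)+h(z)$ directly through $h_3(x,z,xz)$ via Lemma~\ref{LemmaHsum}, and compare it to $h(y)$ with $H_{3,1}$. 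Your approach avoids the $\gcd$/interval bookkeeping needed for the shift-by-$50000$ argument and is arguably more self-contained, at the modest cost of invoking the higher-arity symbol $H_{3,1}$ where the paper gets by with $H_{1,1}$; since the paper already uses $H_{m,n}$ with $m,n\le 3$ elsewhere (e.g., in Lemma~\ref{LemmaDefSum}), this is not a real cost. Both proofs are valid; yours is a clean alternative.
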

\begin{proof} Let $J=\{q\in \Q:1\le q\le 2\}$. With an argument analogous to that in the proof of Lemma \ref{LemmaApprox1}, one sees that the formula
$$
\exists q,(q\in J)\wedge E_{1,1}(x,q)\wedge H_{1,1}(q+50000,y)
$$
works. Details are left to the reader. (The numerical choice is due to the fact that $\log(50000/2+1)>10$ and $\log(50000+1)<11$.)
\end{proof}

%%%%%%%%%%%%%%%%%%%%%%%%%%%%%%%%%%%%%%
%%%%%%%%%%%%%%%%%%%%%%%%%%%%%%%%%%%%%%
%%%%%%%%%%%%%%%%%%%%%%%%%%%%%%%%%%%%%%
%%%%%%%%%%%%%%%%%%%%%%%%%%%%%%%%%%%%%%
%%%%%%%%%%%%%%%%%%%%%%%%%%%%%%%%%%%%%%
%%%%%%%%%%%%%%%%%%%%%%%%%%%%%%%%%%%%%%

\section{Interpretation of the integers}

%%%
%%%
%%%
\subsection{An elliptic curve}\label{SecEllChoice} For the proof of Theorem \ref{ThmMain1} we need an elliptic curve $\Ecal$ over $\Q$ whose Mordell--Weil group is $\Ecal(\Q)\simeq \Z$. There are plenty of such curves and any choice would work for the argument. However, to keep the presentation simple and concrete, let us choose one explicit example: from now on $\Ecal$ denotes the elliptic curve
$$
\Ecal:\quad y^2=x^3+2.
$$
This is the elliptic curve 1728.n4 in the LMFDB \cite{LMFDB}. From this database we see that the point $P_1=(-1,1)$ generates the Mordell--Weil group $\Ecal(\Q)$, which is isomorphic to $\Z$. Furthermore, also from this database we see that the canonical height of $P_1$ is 
$$
\hat{h}(P_1)=0.7545769...
$$
From Example 2.1 in \cite{SilvermancE} with $B=2$ (note that in this reference $\hat{h}$ is normalized by $2$) we see that every point $P\in \Ecal(\Q)$ satisfies 
$$
-3.192< \hat{h}(P) - h_\pi(P)< 3.384.
$$
In particular, the constant $c_\Ecal$ from Proposition \ref{LemmaBasicE} can be taken as  $c_\Ecal=4$.

 For later reference let us define $D=200^2\hat{h}(P_1)\in \R_{>0}$ and note that $D>30000$ (these numerical choices are not optimal, but they are more than enough for our purposes.) Let $\Gamma=[200]\Ecal (\Q)$, then $\Gamma\simeq \Z$ is generated by $Q_1:=[200]P_1$ which has canonical height $D$.  The elements of $\Gamma$ are precisely the points $Q_k:=[k]Q_1=[200k]P_1$ for $k\in \Z$ (note that $Q_0=P_0$ is the point at infinity) and we observe that 
$$
\hat{h}(Q_k)=k^2D\quad \mbox{ for each }k\in \Z.
$$

%%%
%%%
%%%
\subsection{The interpretation function}\label{SecTheta}

We let 
$$
X=\{\pi(Q) : Q_0\ne Q\in \Gamma\}\cup \{0\}\subseteq \Q.
$$
Note that by definition of $\Gamma$, this set $X$ is positive existentially definable in $\Q$ over the language of rings. This is because the map $[200]:\Ecal\to \Ecal$ can be expressed in coordinates by a fixed rational function and $\Gamma=[200]\Ecal(\Q)$. Next we define
$$
X_1 = \{q\in \Q : \exists \gamma\in X, h(q)=h(\gamma)\}
$$
and for $n\ge 1$ we let
$$
X_{n+1} = \{q\in \Q : \exists u\in X_n,\exists v\in X_1, h(u)+h(v)=h(q) \}.
$$
\begin{lemma}\label{LemmaDefXn} The sets $X_n\subseteq \Q$ are positive existentially $\Lcal_h$-definable.
\end{lemma}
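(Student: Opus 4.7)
The plan is to prove this by induction on $n$, assembling the definition from tools already established in the paper.

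For the base case $n=1$, I want to express ``$q\in X_1$'' by the formula
\[
\exists\gamma,\; (\gamma\in X)\wedge E_{1,1}(q,\gamma).
\]
Here $E_{1,1}$ is positive existentially $\Lcal_h$-definable by Lemma \ref{LemmaEquality}, and the condition ``$\gamma\in X$'' is positive existentially definable in $\Q$ over the language of rings, as explained in the paragraph defining $X$: $\Gamma=[200]\Ecal(\Q)$ is the image of $\Ecal(\Q)$ under the multiplication-by-$200$ map, which is given by a fixed rational function on the Weierstrass model, so the set of $x$-coordinates of nonzero points of $\Gamma$ together with $0$ is positive existentially definable. Since positive existential $\Lcal_h$-definability contains positive existential definability over the language of rings, the formula above is positive existential over $\Lcal_h$.

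For the inductive step, assume $X_n$ has a positive existential $\Lcal_h$-definition $\phi_n(q)$. Then I define $X_{n+1}$ by
\[
\exists u\,\exists v,\; \phi_n(u)\wedge \phi_1(v)\wedge S(u,v,q),
\]
where $S$ is the ternary relation of Lemma \ref{LemmaDefSum} which expresses $h(u)+h(v)=h(q)$ and is itself positive existentially $\Lcal_h$-definable. Since the class of positive existential formulas is closed under conjunction and existential quantification, this gives a positive existential $\Lcal_h$-definition of $X_{n+1}$.

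There is really no obstacle here; the lemma is a bookkeeping statement whose content is that the recursive definitions of $X_n$ use only operations (existential quantification, conjunction, equality/sum of heights, and the ring-theoretic definability of $X$) that preserve positive existential $\Lcal_h$-definability. The only points worth noting explicitly are (a) that $X$ itself is positive existentially definable over the language of rings because $[200]$ is a polynomial-in-rational-fractions map and $\pi$ is just the projection to the $x$-coordinate, and (b) that Lemmas \ref{LemmaEquality} and \ref{LemmaDefSum} have already done the work of converting the height equality $h(q)=h(\gamma)$ and the height sum $h(u)+h(v)=h(q)$ into positive existential $\Lcal_h$-formulas. With those in hand, the induction is immediate.
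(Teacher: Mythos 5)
Your proof is correct and follows exactly the same route as the paper's: unwind the recursive definition of $X_n$ using $E_{1,1}$ (Lemma \ref{LemmaEquality}) for the base case and $S$ (Lemma \ref{LemmaDefSum}) for the inductive step, noting that $X$ itself is positive existentially definable over the language of rings. The paper's proof is just a terser statement of the same induction.
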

\begin{proof} These sets are inductively defined using the relations $E_{1,1}(x,y)$ and $S(x,y,z)$. Both of these relations are positive existentially $\Lcal_h$-definable by Lemmas \ref{LemmaEquality} and \ref{LemmaDefSum}.
\end{proof}

The following lemma shows that the height of the elements of $X_4$ is controlled up to a bounded error term.

\begin{lemma}\label{LemmaErrorX4} For every $q\in X_4$ there is a unique non-negative integer $m_q\in \N$ such that  
$$
|h(q)-m_qD|\le 16.
$$
Furthermore, we have:
\begin{itemize}
\item[(i)] if $q\in X_4$ and $m\in \Z$ satisfies $|h(q)-mD|\le 15000$ then $m=m_q$, and
\item[(ii)] for each non-negative integer $m\in \N$ there is $q\in X_4$ with $m=m_q$.
\end{itemize}
\end{lemma}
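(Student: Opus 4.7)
The plan is to unfold the inductive definition of $X_4$ and reduce the claim to Lagrange's four-squares theorem together with the height control of Proposition \ref{LemmaBasicE}. First I will show by induction on $n$ that
$$
h(X_n) = \{h_\pi(Q_{k_1}) + \cdots + h_\pi(Q_{k_n}) : k_1,\ldots,k_n\in \Z_{\ge 0}\},
$$
under the convention $Q_0 = P_0$ (so that $h_\pi(Q_0) = 0$). The inclusion $\subseteq$ is immediate from the recursion: $h(X_1) = h(X) = \{0\}\cup \{h_\pi(Q_k):k\ge 1\}$, and the inductive step is exactly the definition of $X_{n+1}$. For $\supseteq$, given $k_1,\ldots,k_n$ with $h_\pi(Q_{k_i}) = \log N_i$ for some positive integers $N_i$, the rational $q_j = N_1\cdots N_j$ has $h(q_j) = \sum_{i\le j}\log N_i$, and one checks inductively that $q_j\in X_j$.

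Given $q\in X_4$, I fix such a decomposition $h(q) = \sum_{i=1}^4 h_\pi(Q_{k_i})$ and set $m_q := \sum_{i=1}^4 k_i^2 \in \N$. Using $\hat{h}(Q_{k_i}) = k_i^2 D$ together with $|h_\pi(P) - \hat{h}(P)| < c_\Ecal = 4$ for all $P\in \Ecal(\Q)$ (valid also for $P_0$), the triangle inequality yields $|h(q) - m_q D| < 16$. To handle uniqueness of $m_q$ and statement (i) in one stroke, observe that if non-negative integers $m,m'$ both satisfy $|h(q) - mD|\le 15000$, then $|m-m'|D\le 30000 < D$ by the choice $D > 30000$ from Section \ref{SecEllChoice}, forcing $m = m'$. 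This in particular shows a posteriori that $m_q$ does not depend on the chosen decomposition, and yields (i).

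Finally, statement (ii) follows from Lagrange's four-squares theorem: any $m\in \N$ can be written $m = k_1^2 + k_2^2 + k_3^2 + k_4^2$, and the construction in the first paragraph produces some $q\in X_4$ with $h(q) = \sum_i h_\pi(Q_{k_i})$, whence $m_q = m$. The main potentially subtle point is that the representation $h(q) = \sum h_\pi(Q_{k_i})$ need not be unique, so that a priori the integer $\sum k_i^2$ could depend on the choice of decomposition; this is precisely what the $D > 30000$ uniqueness argument dispels. Beyond that, the argument is bookkeeping: correctly tracking the constants $c_\Ecal = 4$ and $D > 30000$ from Section \ref{SecEllChoice}, and the boundary case $k_i = 0$ corresponding to $P_0\in \Gamma$ and $0\in X$.
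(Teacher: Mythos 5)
Your proof is correct and follows essentially the same route as the paper: unfold the definition of $X_4$ to reduce $h(q)$ to a sum of four terms $h_\pi(Q_{k_i})$, apply the estimate $|h_\pi(Q_k)-k^2D|<c_\Ecal=4$ termwise to get the $\le 16$ bound with $m_q=\sum k_i^2$, use $D>30000$ for uniqueness and for item (i), and invoke Lagrange's four-squares theorem for item (ii). Your explicit remark that the decomposition $h(q)=\sum h_\pi(Q_{k_i})$ need not be unique, and that well-definedness of $m_q$ is rescued by the $D>30000$ separation argument, is a point the paper leaves implicit --- a welcome clarification, though not a different method.
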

\begin{proof} Let $q\in X_4$. Then there are $u_1,u_2,u_3,u_4\in X_1$ such that 
$$
h(q)=\sum_{j=1}^4 h(u_j).
$$
Since $u_j\in X_1$, there are $\gamma_j\in X=\{0\}\cup \pi(\Gamma-\{Q_0\})\subseteq \Q$ with $h(u_j)=h(\gamma_j)$. Thus, there are integers $k_1,k_2,k_3,k_4\in \Z$ such that 
$$
h_\pi(Q_{k_j})=h(u_j)\quad \mbox{ for }j=1,2,3,4.
$$
(In the case $\gamma_j=0$ we choose $k_j=0$ since $h_\pi(Q_0)=0=h(0)$.)  By the defining property of the constant $c_\Ecal$ (which in our case can be taken as $c_\Ecal=4$)  we get
$$
4> |h_\pi(Q_{k_j})- \hat{h}(Q_{k_j})| = |h_\pi(Q_{k_j})- k_j^2D|.
$$
Therefore, we have
$$
|h(q)-(k_1^2+k_2^2+k_3^2+k_4^2)D|\le  16.
$$
Let us take $m_q= k_1^2+k_2^2+k_3^2+k_4^2$. The uniqueness of $m_q$ and item (i) follow from the fact that $D>30000$, so the numbers of the form $mD\in \R$ for $m\in \Z$ are separated by a distance of at least $30000$. 

Finally, let us show that every $m\in \N$ occurs as $m_q$ for some $q\in X_4$ (this is item (ii)). Given $m$, by Lagrange's four squares theorem we can write $m=k_1^2+k_2^2+k_3^2+k_4^2$ for some integers $k_j$. Let us choose $u_j=\pi(Q_{k_j})\in X_1$ if $k_j\ne 0$ and $u_j=0\in X_1$ if $k_j=0$. Let $r_j\ge 1$ be integers such that $h(u_j)=\log r_j$ and let $q=r_1r_2r_3r_4$. Then we have 
$$
h(q)=\log (r_1r_2r_3r_4)=\sum_{j=1}^4 h(u_j),
$$
which shows that $q\in X_4$. With this choice of $q$, the same computation as above gives  $m_q=m$.
\end{proof}

For $q\in X_4$ let $\theta(q)=m_q$ be the integer afforded by the previous lemma. We get a well-defined function
$$
\theta: X_4\to \N.
$$
Let us reformulate Lemma \ref{LemmaErrorX4} in terms of $\theta$:
\begin{lemma}\label{LemmaTheta} The function $\theta:X_4\to \N$ is surjective. Furthermore,  for every $q\in X_4$ we have
$$
|h(q)-\theta(q)D|\le 16.
$$
In addition, if for some $q\in X_4$ and  $m\in \Z$ we have $|h(q)-mD|\le  15000$, then $m=\theta(q)$.
\end{lemma}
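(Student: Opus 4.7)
The plan is to observe that this lemma is essentially a bookkeeping restatement of Lemma \ref{LemmaErrorX4} once the function $\theta$ has been named, so nothing beyond unpacking definitions is needed.

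First, I would recall that for each $q\in X_4$, Lemma \ref{LemmaErrorX4} produces a unique non-negative integer $m_q$ with $|h(q)-m_q D|\le 16$, and the assignment $q\mapsto m_q$ is precisely the definition of $\theta(q)$. Thus the bound $|h(q)-\theta(q)D|\le 16$ is immediate from the defining property of $m_q$.

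Next, for surjectivity of $\theta:X_4\to \N$: given $m\in \N$, item (ii) of Lemma \ref{LemmaErrorX4} provides some $q\in X_4$ with $m_q=m$, and by definition this means $\theta(q)=m$. Hence every $m\in \N$ is in the image of $\theta$.

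Finally, the uniqueness assertion (if $|h(q)-mD|\le 15000$ then $m=\theta(q)$) is exactly item (i) of Lemma \ref{LemmaErrorX4} rewritten with $\theta(q)$ in place of $m_q$. There is no genuine obstacle here; the only thing to double-check is that the constants $16$ and $15000$ quoted in the lemma match those appearing in Lemma \ref{LemmaErrorX4}, which they do. Consequently the proof reduces to a two-line invocation of the previous lemma.
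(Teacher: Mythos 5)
Your proposal is correct and matches the paper's intent exactly: the paper introduces $\theta(q)=m_q$ immediately after Lemma \ref{LemmaErrorX4} and presents Lemma \ref{LemmaTheta} explicitly as a reformulation of that lemma, giving no separate proof. Your unpacking of which item of Lemma \ref{LemmaErrorX4} yields which assertion (the defining bound, surjectivity, and the uniqueness margin with the constants $16$ and $15000$) is exactly the bookkeeping that justifies the restatement.
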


%%%
%%%
%%%
\subsection{An auxiliary structure}\label{SecAuxiliary}

The interpretation of the semiring $\N$ in $\Qfrak$ will be constructed using the function $\theta$ defined above. For this, it will be convenient to introduce an auxiliary structure.

Let $B(x,y)$ be the binary relation on $\N$ defined by
$$
B(x,y):\quad \mbox{ there is $k\ge 0$ such that $x=k^2$ and $y=(k+1)^2$.}
$$
We consider the language $\Lcal_B=\{0,1,+,B,=\}$. Note that $\N$ is an $\Lcal_B$-structure in a natural way. The next lemma is standard; we include the proof for the sake of completeness.

\begin{lemma}\label{LemmaMult} Multiplication in $\N$ is positive existentially $\Lcal_B$-definable. 
\end{lemma}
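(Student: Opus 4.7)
The plan is to first positive existentially define the graph of the squaring function $a \mapsto a^2$ using the relation $B$ and addition, and then use the polynomial identity $(a+b)^2 = a^2 + b^2 + 2ab$ to reduce multiplication to squaring.

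\medskip

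\textbf{Step 1: Define squaring.} The key observation is that if $B(x,y)$ holds then $x=k^2$ and $y=(k+1)^2$ for some $k\ge 0$, so that
\[
y-x=(k+1)^2-k^2=2k+1=2(k+1)-1,
\]
which rewritten without subtraction becomes $x+2a=y+1$ upon setting $a=k+1$. This suggests defining the squaring relation $S(a,y)$ (meaning $y=a^2$) by the positive existential $\Lcal_B$-formula
\[
S(a,y):\quad (a=0\wedge y=0)\;\vee\;\bigl(\exists x,\; B(x,y)\wedge x+a+a=y+1\bigr).
\]
To check correctness: if $a\ge 1$ and $y=a^2$, take $x=(a-1)^2$; then $B(x,y)$ holds and
\[
x+a+a=(a-1)^2+2a=a^2+1=y+1.
\]
Conversely, if $B(x,y)$ holds with $x=k^2$, $y=(k+1)^2$ and $x+2a=y+1$, then $2a=(k+1)^2+1-k^2=2k+2$, forcing $a=k+1$ and hence $y=a^2$.

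\medskip

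\textbf{Step 2: Define multiplication.} Using the identity $(a+b)^2=a^2+b^2+2ab$, define
\[
M(a,b,z):\quad \exists p\,\exists q\,\exists r\;\bigl(S(a,p)\wedge S(b,q)\wedge S(a+b,r)\wedge r=p+q+z+z\bigr).
\]
By the correctness of $S$, the relation $M(a,b,z)$ holds in $\N$ if and only if $p=a^2$, $q=b^2$, $r=(a+b)^2$ and $r=p+q+2z$, which is equivalent to $z=ab$. Since $S$ is positive existentially $\Lcal_B$-definable by Step~1, so is $M$, which is the graph of multiplication.

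\medskip

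There is no real obstacle here: once one writes down the square-difference identity $(k+1)^2-k^2=2k+1$ to capture squaring from $B$, the rest follows from the standard polarization identity. The only mild bookkeeping point is that, since $\N$ has no subtraction, one must move all negative terms to the other side of equalities (as was done in both defining formulas above).
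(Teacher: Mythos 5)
Your proof is correct and takes essentially the same approach as the paper: extract the squaring function from $B$ via the consecutive-square difference $(k+1)^2-k^2=2k+1$, then recover multiplication from squaring via $(a+b)^2=a^2+b^2+2ab$. The only cosmetic difference is in the squaring step: the paper writes $\sigma(x,y)$ ($y=x^2$) via $\exists z,\,B(y,z)\wedge z=y+2x+1$, taking $z=(x+1)^2$ ``above'' $y$, which avoids your extra disjunct for $a=0$ (you go ``below'' $y$ to $x=(a-1)^2$, which fails when $a=0$).
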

\begin{proof} Define the binary relation $\sigma(x,y)$ on $\N$ by
$$
\sigma(x,y):\quad y=x^2,
$$
which is the graph of the squaring function. We claim that $\sigma$ is positive existentially $\Lcal_B$-definable. Indeed, one sees that the formula
$$
\exists z, B(y,z)\wedge(z=y+2x+1)
$$
gives a positive existential $\Lcal_B$-definition for $\sigma(x,y)$.
Finally, we note that the relation $x\cdot y=z$ is defined by the expression
$$
\exists u\exists v\exists w, \sigma(x,u)\wedge \sigma(y,v)\wedge\sigma(x+y,w)\wedge(w=u+2z+v).
$$
\end{proof}

%%%
%%%
%%%
\subsection{Definability in the rationals with heights}

In order to prove Theorem \ref{ThmMain1} it suffices to interpret the $\Lcal_B$-structure given by $\N$ in the $\Lcal_h$-structure $\Qfrak$. For this we use the function $\theta$ constructed in Section \ref{SecTheta}.
\begin{lemma}\label{LemmaDef1} The sets $\theta^{-1}(0)\subseteq X_4$, $\theta^{-1}(1)\subseteq X_4$, and $(\theta^2)^{-1}(=)\subseteq X_4^2$ are positive existentially $\Lcal_h$-definable.
\end{lemma}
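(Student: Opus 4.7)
The plan is to turn each $\theta$-constraint into an approximate height comparison using the relations $E^M$ from Lemma \ref{LemmaApproxE}. The two ingredients are the error bound $|h(q) - \theta(q) D| \le 16$ and the uniqueness window from Lemma \ref{LemmaTheta}: whenever the error on $h(q)$ stays below $15000$, the integer $\theta(q)$ is uniquely forced. Since $D > 30000$, the factor-of-$4$ blow-up inside $E^M$ will be comfortably absorbed, so all I need to do is pick small $M$'s and check that the resulting error budgets stay within the $15000$ window.

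For $\theta^{-1}(0)$ I would use the formula $(q \in X_4) \wedge E^{16}(q, 1)$. If $\theta(q) = 0$, then $h(q) \le 16 = h(1) + 16$, so Lemma \ref{LemmaApproxE}(i) gives $E^{16}(q, 1)$. Conversely, if $E^{16}(q, 1)$ holds, then $h(q) \le 64 < 15000$, so the uniqueness clause of Lemma \ref{LemmaTheta} forces $\theta(q) = 0$.

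For $\theta^{-1}(1)$, I would pin down a specific witness $q^\star \in X_4$ with $\theta(q^\star) = 1$ and use $(q \in X_4) \wedge E^{32}(q, q^\star)$. A natural choice is $q^\star = \pi(Q_1)$, a concrete rational number obtained from $P_1 = (-1, 1)$ by iterating the addition law $200$ times; it lies in $X \subseteq X_1 \subseteq X_4$, where the chain $X_n \subseteq X_{n+1}$ is obtained by padding with $0 \in X$ (noting $h(0) = 0$). The estimate $|h_\pi(Q_1) - D| < c_\Ecal \le 4$ yields $\theta(q^\star) = 1$. Then the usual bookkeeping works: $\theta(q) = 1$ places both $h(q)$ and $h(q^\star)$ within $16$ of $D$, hence within $32$ of each other, so $E^{32}$ holds; conversely $E^{32}(q, q^\star)$ yields $|h(q) - h(q^\star)| \le 128$, so $|h(q) - D| \le 144 < 15000$, forcing $\theta(q) = 1$. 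Since $q^\star$ is a specific rational number, it can be introduced by an existentially quantified variable constrained by a single polynomial equation with integer coefficients, keeping the formula positive existential over $\Lcal_h$.

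Finally, $(\theta^2)^{-1}(=)$ is defined by $(q_1 \in X_4) \wedge (q_2 \in X_4) \wedge E^{32}(q_1, q_2)$. If $\theta(q_1) = \theta(q_2) = m$, both heights lie within $16$ of the same $mD$, so within $32$ of each other and $E^{32}$ holds. Conversely, if $E^{32}(q_1, q_2)$ holds, then $|(\theta(q_1) - \theta(q_2)) D| \le 128 + 32 = 160$, and since $D > 30000$ this forces the integer difference to be $0$. No serious obstacle is expected; the only nontrivial design choice is locating the concrete witness $q^\star$ for $\theta^{-1}(1)$ and encoding it by a polynomial equation, after which the rest reduces to tracking additive error terms.
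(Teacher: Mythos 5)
Your proposal is correct and takes essentially the same approach as the paper: it uses the same tolerances $E^{16}$ (for $\theta^{-1}(0)$) and $E^{32}$ (for $\theta^{-1}(1)$ and for equality), and the same additive error bookkeeping with the $15000$-window from Lemma~\ref{LemmaTheta}. The only cosmetic differences are that you anchor $\theta^{-1}(0)$ via $E^{16}(q,1)$ instead of $E^{16}(0,x)$ (both have height $0$, so this is immaterial), and that you make explicit the witness $q^\star = \pi(Q_1)$ for $\theta^{-1}(1)$ whereas the paper simply invokes the surjectivity of $\theta$ to pick some $q_1$ with $\theta(q_1)=1$.
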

\begin{proof} Recall the approximate equality relation $E^{M}(x,y)$ from Lemma \ref{LemmaApproxE}. We will repeatedly use the properties of $E^M(x,y)$ as well as Lemma \ref{LemmaTheta} without mention.

First we claim that the expression $E^{16}(0,x)\wedge (x\in X_4)$ (which is positive existentially $\Lcal_h$-definable) defines $\theta^{-1}(0)$. Indeed, if $E^{16}(0,x)$ holds then $|h(x)-0|\le 4\cdot 16=64$ and since $x\in X_4$ we have that $\theta(x)$ must be $0$. Conversely, if $x\in X_4$ and $\theta(x)=0$, then $|h(x)-0\cdot D|\le  16$, which implies that $E^{16}(0,x)$ holds. This proves the result for $\theta^{-1}(0)\subseteq X_4$.

Let $q_1\in X_4$ be a rational number with $\theta(q_1)=1$ (this is possible as $\theta$ is surjective.) We claim that the expression $E^{32}(q_1,x)\wedge (x\in X_4)$ defines $\theta^{-1}(1)$. Indeed, if $E^{32}(q_1,x)$ holds then 
$$
|h(x)-1\cdot D|\le |h(x)-h(q_1)|+|h(q_1)-D|\le  4\cdot 32 + 16=144. 
$$
We conclude $\theta(x)=1$. On the other hand, if $x\in \theta^{-1}(1)$ we have
$$
 |h(x)-h(q_1)|\le |h(x)-D| + |h(q_1)-D|\le  16+16=32,
$$
which implies that $E^{32}(q_1,x)$ holds.

Finally, we claim that the expression $E^{32}(x,y)\wedge (x,y\in X_4)$ defines $(\theta^2)^{-1}(=)$. Indeed, if $x,y\in X_4$ and $E^{32}(x,y)$ holds, then
$$
|\theta(x)-\theta(y)|\cdot D \le |h(x)-\theta(x)D|+|h(y)-\theta(y)D|+|h(x)-h(y)|\le 16+16+4\cdot 32=160
$$
which implies $\theta(x)=\theta(y)$ since $D>30000$. Conversely, if $x,y\in X_4$ and $\theta(x)=\theta(y)=m$ then
$$
|h(x)-h(y)|\le |h(x)-mD|+|h(y)-mD|\le  32,
$$
implying that $E^{32}(x,y)$ holds.
\end{proof}

\begin{lemma}\label{LemmaDef2} Let $\Scal=\{(x,y,z)\in \N^3 : x+y=z\}\subseteq \N^3$. Then $(\theta^3)^{-1}(\Scal)\subseteq X_4^3$ is positive existentially $\Lcal_h$-definable.
\end{lemma}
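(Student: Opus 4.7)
The plan is to define $(\theta^3)^{-1}(\mathcal{S})$ by combining the positive existential sum-of-heights relation $S$ from Lemma \ref{LemmaDefSum} with the approximate height equality $E^M$ from Lemma \ref{LemmaApproxE}. The key identity driving the argument is that, whenever $\theta(x)+\theta(y)=\theta(z)$, the sum $h(x)+h(y)$ approximates $h(z)$ to within a constant independent of the values, because $h(q)$ is within $16$ of $\theta(q)D$ for every $q\in X_4$ by Lemma \ref{LemmaTheta}.

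Concretely, I would consider the formula
$$
\phi(x,y,z):\quad (x\in X_4)\wedge (y\in X_4)\wedge (z\in X_4)\wedge \exists w\bigl(S(x,y,w)\wedge E^{48}(w,z)\bigr).
$$
This is positive existentially $\mathcal{L}_h$-definable by Lemmas \ref{LemmaDefXn}, \ref{LemmaDefSum}, and \ref{LemmaApproxE}. I then have to verify that $\phi(x,y,z)$ holds if and only if $x,y,z\in X_4$ and $\theta(x)+\theta(y)=\theta(z)$.

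For the forward implication, assume $x,y,z\in X_4$ and $\theta(x)+\theta(y)=\theta(z)$. Writing $x=a/b$ and $y=c/d$ in lowest terms, take $w=\max(|a|,|b|)\cdot\max(|c|,|d|)\in\mathbb{Z}_{\ge 1}$, so that $h(w)=h(x)+h(y)$ and thus $S(x,y,w)$ holds. Using Lemma \ref{LemmaTheta} on each of $x,y,z$, the triangle inequality and the hypothesis $\theta(z)=\theta(x)+\theta(y)$ give $|h(w)-h(z)|=|h(x)+h(y)-h(z)|\le 48$, so $E^{48}(w,z)$ holds by Lemma \ref{LemmaApproxE}(i). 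Conversely, if $\phi(x,y,z)$ holds, then property (ii) of $E^{48}$ yields $|h(w)-h(z)|\le 192$, and together with Lemma \ref{LemmaTheta} applied to $x,y,z$ we estimate
$$
|(\theta(x)+\theta(y)-\theta(z))D|\le |h(x)-\theta(x)D|+|h(y)-\theta(y)D|+|h(z)-\theta(z)D|+|h(x)+h(y)-h(z)|\le 240.
$$
Since $D>30000$, the integer $\theta(x)+\theta(y)-\theta(z)$ must vanish.

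There is no real obstacle here beyond bookkeeping; the main point is to ensure that the witness $w$ needed in the forward direction truly exists in $\mathbb{Q}$, which is immediate from the elementary observation that $h(x)+h(y)$ is the logarithm of a positive integer for any $x,y\in\mathbb{Q}$, and to keep the constant in $E^{48}$ large enough to absorb the three $16$'s from Lemma \ref{LemmaTheta} while still being far below $D$ after multiplication by $4$.
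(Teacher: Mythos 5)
Your proof is correct and follows essentially the same route as the paper's: you use the identical defining formula (with $S$, $E^{48}$, and $X_4$), the same error budget of $3\cdot 16 + 192 = 240 < D$, and the same witness $w$ with $h(w)=h(x)+h(y)$. The only cosmetic difference is that you spell out $w$ via numerators and denominators where the paper writes $w=uv$ with $h(x)=\log u$, $h(y)=\log v$; these are the same construction.
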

\begin{proof} Let us recall the relation $S(x,y,z)$ on $\Q^3$ from Lemma \ref{LemmaDefSum}. We claim that the expression
$$
S'(x,y,z):\quad (x,y,z\in X_4)\wedge \exists w\in \Q, E^{48}(w,z)\wedge S(x,y,w)
$$
defines $(\theta^3)^{-1}(\Scal)$. This suffices, as $S'$ is positive existentially $\Lcal_h$-definable.

Suppose that $S'(x,y,z)$ holds. Then $x,y,z\in X_4$ and there is $w\in \Q$ satisfying
$$
|h(z)-h(w)|\le 4\cdot 48=192.
$$
Since $S(x,y,w)$ holds, we get
$$
|h(x)+h(y)-h(z)|\le 192,
$$
and we deduce
$$
\begin{aligned}
|\theta(x)&+\theta(y)-\theta(z)|\cdot D \\
&= |(\theta(x)D-h(x)) +(\theta(y)D-h(y))-(\theta(z)D-h(z)) +  (h(x)+h(y)-h(z))|\\
&\le  3\cdot 16+192=240.
\end{aligned}
$$
Since $\theta$ takes integer values and $D>30000$ we conclude $\theta(x)+\theta(y)=\theta(z)$.

Conversely, suppose that $x,y,z\in X_4$ satisfy $\theta(x)+\theta(y)=\theta(z)$. Then we have
$$
\begin{aligned}
|h(x)+h(y)-h(z)|&=|(h(x)-\theta(x)D)+(h(y)-\theta(y)D)-(h(z)-\theta(z)D)|\\
&\le  3\cdot 16=48.
\end{aligned}
$$
Let $u,v$ be positive integers with $h(x)=\log u$ and $h(y)=\log v$, and let $w=uv$. Then $S(x,y,w)$ holds and we deduce
$$
|h(w)-h(z)|\le 48,
$$
which implies that $E^{48}(w,z)$ holds. Hence, $S'(x,y,z)$ holds.
\end{proof}
Recall the binary relation $B\subseteq \N^2$ from Section \ref{SecAuxiliary}.
\begin{lemma}\label{LemmaDef3} The set $(\theta^2)^{-1}(B)\subseteq X_4^2$ is positive existentially $\Lcal_h$-definable in $\Q^2$.
\end{lemma}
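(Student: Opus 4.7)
The plan is to express $B(\theta(x),\theta(y))$ via a disjunction on the value of $k$ in the definition of $B$, separating the cases $k = 0$ and $k \geq 1$. The case $k = 0$ corresponds to $(\theta(x),\theta(y))=(0,1)$, which is directly encoded by $x \in \theta^{-1}(0) \wedge y \in \theta^{-1}(1)$; both sets are positive existentially $\Lcal_h$-definable by Lemma \ref{LemmaDef1}, and disjunction is allowed in positive existential formulas.

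For the case $k \geq 1$, I would exploit the fact that for any nonzero point $P = Q_k \in \Gamma$ one has $\pi(P) \in X \subseteq X_4$ with $\theta(\pi(P)) = k^2$, so nonzero elements of $\Gamma$ encode nonzero perfect squares via $\theta$. I would then assert the existence of affine coordinates $(u_1,u_2),(u_3,u_4) \in \Ecal(\Q)$ such that $(u_1,u_2) \in [200]\Ecal(\Q)$ (expressible positive existentially by quantifying over an $R \in \Ecal(\Q)$ and using the explicit rational-function formulas for the $[200]$ map), $(u_3,u_4) = (u_1,u_2) + Q_1$ (using the polynomial group-law formulas on $\Ecal$), together with $\theta(u_1)=\theta(x)$ and $\theta(u_3)=\theta(y)$ via $(\theta^2)^{-1}(=)$ from Lemma \ref{LemmaDef1}. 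Introducing the points by affine coordinates automatically excludes $Q_0$, so the quantified points must be of the form $P = Q_k$ and $P'=Q_{k+1}$ for some $k \in \Z \setminus \{0,-1\}$.

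To isolate $k \geq 1$ from the remaining negative values of $k$ (which would force $(k+1)^2 < k^2$ and hence fall outside $B$), I would finally impose the strict inequality $\theta(y) > \theta(x)$. The main obstacle is that negation is not permitted in positive existential formulas, so strict inequality must be expressed indirectly. For this I would fix a rational $q_1 \in X_4$ with $\theta(q_1)=1$ (which exists by surjectivity of $\theta$, ensured by Lemma \ref{LemmaErrorX4}) and express $\theta(y) > \theta(x)$ as
$$
\exists z, \exists q, \ (\theta^3)^{-1}(\Scal)(q_1,z,q) \wedge (\theta^3)^{-1}(\Scal)(x,q,y),
$$
using the sum relation from Lemma \ref{LemmaDef2}: this forces $\theta(q) = 1 + \theta(z) \geq 1$ and hence $\theta(y) = \theta(x) + \theta(q) \geq \theta(x)+1$. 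With every conjunct now positive existentially $\Lcal_h$-definable, combining the two disjuncts (the $k=0$ alternative and the elliptic-curve alternative just described) yields the required definition of $(\theta^2)^{-1}(B)$. The correctness check amounts to verifying that $P = +Q_k$ recovers $B(k^2,(k+1)^2)$ for $k \geq 1$, while the sign-flipped choice $P = -Q_k$ would give $\theta(\pi(P+Q_1)) = (k-1)^2 \leq k^2$ and is therefore discarded by the strict inequality.
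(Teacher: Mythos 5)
Your proof is correct, and it takes a genuinely different route from the paper's at several points, so let me compare. Both you and the paper use the same basic idea: witness the pair of consecutive squares $(k^2,(k+1)^2)$ by a pair of consecutive multiples $(Q_k, Q_{k+1})$ in $\Gamma=[200]\Ecal(\Q)$, defined positive existentially via the group law and the $[200]$-map, and then kill the spurious sign $k\le -2$ by a strict height inequality. The differences are: (1) You peel off $k=0$ as a separate disjunct handled by $\theta^{-1}(0)\times\theta^{-1}(1)$, whereas the paper absorbs $k=-1,0$ into an auxiliary set $\Ccal'$ (using height equality $E_{1,1}$ so that the point at infinity is represented by any rational of height $0$) and only afterwards imposes an inequality to restrict to $k\geq 0$. (2) You compare $x,y$ to the curve points via the exact relation $(\theta^2)^{-1}(=)$ from Lemma \ref{LemmaDef1}; the paper instead compares heights approximately with $E^{20}$ and verifies by hand that the error budget (using $c_\Ecal=4$ and $D>30000$) pins down $\theta(x)=k^2$ and $\theta(y)=(k+1)^2$. (3) To express $\theta(y)>\theta(x)$ you use the additive relation $(\theta^3)^{-1}(\Scal)$ together with a fixed $q_1\in\theta^{-1}(1)$, whereas the paper uses the dedicated strict height comparison $L(\gamma,\delta)$ from Lemma \ref{LemmaLess}. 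In particular, your argument does not need Lemma \ref{LemmaLess} at all, which is a genuine simplification of the dependency structure: you deduce the strict-inequality machinery from the arithmetic of $\theta$ (Lemmas \ref{LemmaDef1} and \ref{LemmaDef2}) rather than from a separate height estimate. The paper's version has the advantage of being a direct height computation that does not loop back through the addition relation, but the two are of comparable length and difficulty. One small point to tighten in your write-up: for $k=1$, the sign-flipped choice $P=-Q_1$ gives $P+Q_1=Q_0$, which is not affine and is therefore already excluded before the strict inequality is even invoked; your inequality $(k-1)^2\le k^2$ is only needed for $k\ge 2$.
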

\begin{proof} Recall the notation from Section \ref{SecEllChoice}. In particular, the point $Q_1$ generates the group $\Gamma$ and for every integer $k$ we have the point $Q_k=[k]Q_1\in \Gamma$. Observe that $X_1\subseteq X_4$ because $0\in X_1$.  We note that the set
$$
\Ccal = \{(\gamma,\delta)\in \Q^2 : \mbox{ there is $k\in \Z-\{-1,0\}$ with $\gamma=\pi(Q_k)$ and $\delta=\pi(Q_{k+1})$}\}\subseteq X_1^2\subseteq X_4^2
$$
is positive existentially $\Lcal_h$-definable. Indeed, note that $\Gamma-\{Q_0\}=\Gamma\cap \Q^2$ (the affine points of $\Gamma$) and the set  $\Ccal$ is defined by the following expression:
$$
\exists Q,R\in \Gamma\cap \Q^2, (R=Q+Q_1)\wedge (\gamma=\pi(Q))\wedge (\delta =\pi(R)).
$$
Using the set $\Ccal$ together with the relation $E_{1,1}(x,y)$ (and a disjunction to include the cases $k=-1,0$) we deduce that the set
$$
\Ccal' = \{(\gamma,\delta)\in \Q^2 : \mbox{ there is $k\in \Z$ with $h(\gamma)=h_\pi(Q_k)$ and $h(\delta)=h_\pi(Q_{k+1})$}\}\subseteq X_4^2
$$
is positive existentially $\Lcal_h$-definable. 

We claim that the set
$$
\Ccal'_+ = \{(\gamma,\delta)\in \Q^2 : \mbox{ there is $k\in \N$ with $h(\gamma)=h_\pi(Q_k)$ and $h(\delta)=h_\pi(Q_{k+1})$}\}\subseteq X_4^2
$$
is the same as the set
$$
\{(\gamma,\delta)\in \Ccal': L(\gamma,\delta)\}
$$
with $L(x,y)$ being the relation from Lemma \ref{LemmaLess}. Indeed, since $|h_\pi(Q_k)-k^2D|\le 4$ and $D>30000$, this follows from the observation that $k^2 < (k+1)^2$ if and only if $k\ge 0$.

Finally, we claim that $B(\theta(x),\theta(y))$ holds for a given pair $(x,y)\in X_4^2$ if and only if the following holds:
$$
B'(x,y):\quad (x,y\in X_4) \wedge \exists (\gamma, \delta)\in \Ccal'_+, E^{20}(x,\gamma)\wedge E^{20}(y,\delta).
$$
This suffices, as the previous relation is positive existentially $\Lcal_h$-definable.

First, suppose that $B(\theta(x),\theta(y))$ holds for a pair $(x,y)\in X_4^2$. Let $k\ge 0$ be such that $\theta(x)=k^2$ and $\theta(y)=(k+1)^2$. Then
$$
|h(x)-k^2D|\le 16\mbox{ and }|h(y)-(k+1)^2D|\le 16.
$$
We deduce that
$$
|h(x)-h_\pi(Q_k)|=|(h(x)-k^2D)+(\hat{h}(Q_k)-h_\pi(Q_k))|\le 16+4=20
$$
and similarly, $|h(y)-h_\pi(Q_{k+1})|\le 20$. We can take $\gamma=\pi(Q_k)$ and $\delta=\pi(Q_{k+1})$ (unless $k=0$ in which case we take $\gamma=0$) to conclude that $B'(x,y)$ holds.

Conversely, suppose that $B'(x,y)$ holds and let $(\gamma,\delta)\in \Ccal'_+$ be as in the definition of $B'(x,y)$. Let $k\ge 0$ be such that $h(\gamma)=h_\pi(Q_k)$ and $h(\delta)=h_\pi(Q_{k+1})$ and note that 
$$
|h(\gamma)-k^2D|=|h_\pi(Q_k)-\hat{h}(Q_k)|\le 4,
$$
and similarly $|h(\delta)-(k+1)^2D|\le 4$. Since $E^{20}(x,\gamma)$ holds, we deduce
$$
|h(x)-k^2D|=|h(x)-h_\pi(\gamma)+h_\pi(\gamma)-k^2D|\le 4\cdot 20 + 4=84
$$
concluding that $\theta(x)=k^2$. Similarly we get $\theta(y)=(k+1)^2$.
\end{proof}

%%%
%%%
%%%
\subsection{Proof of the main result} Finally, we can prove Theorem \ref{ThmMain1}:

\begin{proof}[Proof of Theorem \ref{ThmMain1}] By Lemma \ref{LemmaMult}, it suffices to give a positive existential interpretation of the $\Lcal_B$-structure $(\N;0,1,+,B,=)$ in the $\Lcal_h$-structure $\Qfrak$. This is achieved by the function $\theta:X_4\to \N$ which is surjective (cf. Lemma \ref{LemmaTheta}) and it satisfies the required definability conditions by Lemmas \ref{LemmaDefXn}, \ref{LemmaDef1}, \ref{LemmaDef2}, and \ref{LemmaDef3}.
\end{proof}
%%%%%%%%%%%%%%%%%%%%%%%%%%%%%%%%%%%%%%
%%%%%%%%%%%%%%%%%%%%%%%%%%%%%%%%%%%%%%
%%%%%%%%%%%%%%%%%%%%%%%%%%%%%%%%%%%%%%
%%%%%%%%%%%%%%%%%%%%%%%%%%%%%%%%%%%%%%
%%%%%%%%%%%%%%%%%%%%%%%%%%%%%%%%%%%%%%
%%%%%%%%%%%%%%%%%%%%%%%%%%%%%%%%%%%%%%

\section{Acknowledgments}

N.G.-F. was supported by ANID Fondecyt Regular grant 1211004 from Chile.

H.P. was supported by ANID Fondecyt Regular grant 1230507 from Chile.

X.V. was supported by ANID Fondecyt Regular grant 1210329 from Chile. 

This material is based upon work supported by the National Science Foundation under Grant No. DMS-1928930 while the authors participated in the program Definability, Decidability, and Computability in Number Theory, part 2,  hosted by the Mathematical Sciences Research Institute in Berkeley, California, during the Summer of 2022.

The authors are deeply indebted to Thanases Pheidas who, for many years, generously shared his ideas on Hilbert's tenth problem for $\Q$ and other open problems. In particular, this article would not exist without the key contributions of Thanases during our visit to MSRI in the Summer of 2022.   We respectfully dedicate this work to his memory.

%%%%%%%%%%%%%%%%%%%%%%%%%%%%%%%%%%%%%%
%%%%%%%%%%%%%%%%%%%%%%%%%%%%%%%%%%%%%%
%%%%%%%%%%%%%%%%%%%%%%%%%%%%%%%%%%%%%%

\end{document}